\newcommand\PrologPredicateStyle{}
\newcommand\PrologVarStyle{}
\newcommand\PrologAnonymVarStyle{}
\newcommand\PrologAtomStyle{}
\newcommand\PrologOtherStyle{}
\newcommand\PrologCommentStyle{}
\newif\ifpredicate@prolog@
\newif\ifwithinparens@prolog@
\newcommand\@testChar@prolog%
\newcommand\detectTypeAndHighlight@prolog
  \def\lst@thestyle{\PrologAtomStyle}%
    \def\lst@thestyle{\PrologPredicateStyle}%
\splitfirstchar@prolog\expandafter{\the\lst@token}%
        \let\lst@thestyle\PrologAnonymVarStyle%
        \let\lst@thestyle\PrologVarStyle%
          \let\lst@thestyle\PrologVarStyle%
          \let\iterate\relax
\newcommand\splitfirstchar@prolog{}
\def\splitfirstchar@prolog#1{\@splitfirstchar@prolog#1\relax}
\newcommand\@splitfirstchar@prolog{}
\def\@splitfirstchar@prolog#1#2\relax{\def\@testChar@prolog{#1}}
\def\beginlstdelim#1#2%
  \def\endlstdelim{\PrologOtherStyle #2\egroup}%
  {\PrologOtherStyle #1}%
\newcommand\lang@prolog{Prolog-pretty}
\lstdefinelanguage\expandafter%
\newcommand\@ddedToOutput@prolog\relax
    \let\@ddedToOutput@prolog\@testChar@prolog%
\definecolor{PrologPredicate}{RGB}{000,031,255}
\definecolor{PrologVar}      {RGB}{024,021,125}
\definecolor{PrologAnonymVar}{RGB}{000,127,000}
\definecolor{PrologAtom}     {RGB}{186,032,032}
\definecolor{PrologComment}  {RGB}{063,128,127}
\definecolor{PrologOther}    {RGB}{000,000,000}
\renewcommand\PrologPredicateStyle{\color{PrologPredicate}}
\renewcommand\PrologVarStyle{\color{PrologVar}}
\renewcommand\PrologAnonymVarStyle{\color{PrologAnonymVar}}
\renewcommand\PrologAtomStyle{\color{PrologAtom}}
\renewcommand\PrologCommentStyle{\itshape\color{PrologComment}}
\renewcommand\PrologOtherStyle{\color{PrologOther}}
\lstdefinestyle{Prolog-pygsty}
{
  language     = Prolog-pretty,
  upquote      = true,
  stringstyle  = \PrologAtomStyle,
  commentstyle = \PrologCommentStyle,
  literate     =
    {:-}{{\PrologOtherStyle :-}}2
    {,}{{\PrologOtherStyle ,}}1
    {.}{{\PrologOtherStyle .}}1
}
\definecolor{burntumber}{rgb}{0.54, 0.2, 0.14}
\definecolor{coolblack}{rgb}{0.0, 0.18, 0.39}
\definecolor{darkterracotta}{rgb}{0.8, 0.31, 0.36}
\definecolor{frenchbeige}{rgb}{0.65, 0.48, 0.36}
\newcommand{\del}{\partial}
\renewcommand{\sl}{\mathfrak{sl}}
\newcommand{\psl}{\mathfrak{psl}}
\newcommand{\OO}{\mathcal{O}}
\newcommand{\CC}{\mathbb{C}}
\newcommand{\Z}{\mathbb{Z}}
\newcommand{\F}{\mathbb{F}}
\newcommand{\ud}{\mathrm{d}}
\newcommand{\DD}{\mathscr{D}}
\newcommand{\TW}{\mathrm{TW}}
\newcommand{\WT}{\mathrm{WT}}
\newcommand{\W}{\mathcal{W}}
\renewcommand{\O}{\mathcal{O}}
\DeclareMathOperator{\GF}{GF}
\DeclareMathOperator{\GL}{GL}
\DeclareMathOperator{\im}{im}
\DeclareMathOperator{\ad}{ad}
\DeclareMathOperator{\Der}{Der}
\DeclareMathOperator{\End}{End}
\DeclareMathOperator{\Char}{char}
\newcommand{\FF} {\mathbb{F} }
\theoremstyle{plain}
\newtheorem{thm}{Theorem}[section]
\newtheorem{lemma}[thm]{Lemma}
\theoremstyle{definition}
\newtheorem{defin}[thm]{Definition}
\newtheorem{ex}[thm]{Example}
\newtheorem{rmk}[thm]{Remark}
\newcommand{\gl}{{\mathfrak{gl}}}
\newcommand{\fsl}{{\mathfrak{fsl}}}
\newcommand{\s}{{\mathfrak{s}}}
\begin{document}

	\title{A Prolog assisted search for new simple Lie algebras}
	
	\author[Cushing]{David Cushing}
	\address{School of Mathematics, Statistics and Physics, Newcastle University, Newcastle upon Tyne, Great Britain}
	\email{David.Cushing1@newcastle.ac.uk}

	\author[Stagg]{George W. Stagg}
	\address{School of Mathematics, Statistics and Physics, Newcastle University, Newcastle upon Tyne, Great Britain}
	\email{George.Stagg@newcastle.ac.uk}
	
		\author[Stewart]{David I. Stewart}
	\address{School of Mathematics, Statistics and Physics, Newcastle University, Newcastle upon Tyne, Great Britain}
	\email{David.Stewart@newcastle.ac.uk}

	\date{\today}
	
	\begin{abstract}
		We describe some recent computer investigations with the `Constraint Logic Programming over Finite Domains'---CLP(FD)---library in the Prolog programming environment to search for new simple Lie algebras over the field $\GF(2)$ of $2$ elements. Motivated by a paper of Grishkov et. al., we specifically look for those with a \emph{thin decomposition}, and we settle one of their conjectures. We extrapolate from our results the existence of two new infinite families of simple Lie algebras, in addition to finding seven new sporadic examples in dimension $31$. We also better contextualise some previously discovered simple algebras, putting them into families which do not seem to have ever appeared in the literature, and give an updated table of those currently known.
	\end{abstract}
	
	\maketitle
	
	
	\section{Introduction}
	Our principal ambition for this paper is to make invitations to pure mathematicians to consider deploying constraint logic programming systems to assist in research, and to enthusiasts of the logic programming paradigm to consider applying their skills to problems in Lie theory. To this end we narrate a recent adventure searching for new simple Lie algebras over the field $\FF_2=\GF(2)$ of two elements in dialogue with the Prolog programming environment\footnote{Specifically, SWI-Prolog (threaded, 64 bits, version 7.6.4)\cite{prolog}}, using the software library ``Constraint Logic Programming over Finite Domains'', also known as CLP(FD)\cite{clpfd}.
	
	We describe our results. The necessary mathematical background material is available in the preliminaries section.
	
	The result of our interactions with the output of Prolog was the discovery of two previously unknown \emph{infinite} families of \emph{absolutely} simple Lie algebras over $\GF(2)$, and seven further sporadic new examples of Lie algebras of dimension $31$. 
	
	More specifically, for $n\geq 4$ we define Lie algebras $N_1(n)$ and $N_2(n)$, each of dimension $2^n-1$, that have \emph{thin decompositions} in the sense of Grishkov et al.--- \cite[\S3.4]{GGRZ22}. When $n$ is even, each $N_i(n)$ is simple and new for $n\geq 4$, whereas when $n$ is odd, the derived subalgebra $\DD(N_i(n))$ is a simple codimension $1$-subalgebra of $N_i(n)$; we have that while $\DD(N_1(2n+1))$ is probably a known family, $\DD(N_2(2n+1))$ is also new for $n\geq 2$. Let us highlight that the $15$-dimensional algebra $N_1(4)$ is itself new to the literature, whereas $N_2(4)$ had already been found in \cite{Eic10}. We additionally discovered two further simple Lie algebras of dimension $15$, with our own implementation of the methods of \cite{Eic10}, which, together with $N_1(4)$ have been recently discovered independently in \cite{Em22}. (We have labelled these $\mathrm{EM}$ and $\mathrm{ME}$.)
	
	One thing worthy of note is that we needed an invariant to distinguish the algebras we found from those in the literature; to wit, we calculated the number of $2$-nilpotent elements of known and unknown Lie algebras $L$ of a fixed dimension. This calculation amounts to counting the number of points on a rather complicated affine variety (or scheme if one prefers) over $\GF(2)$, which is a fundamental classical problem. What the authors found remarkable was that while well-known routines in Magma, GAP and Sage were relatively easy to use, their runtime was unsatisfactory and produced no progress after hours. On the other hand, using a more complicated input with Prolog calculated and counted the points in seconds.
	
	This article is intended to be self-contained for a reader conversant with linear algebra and finite fields. With that in mind, its structure is as follows. We begin with preliminaries: \S\ref{sec:prologprolog} gives an overview of Prolog and CLP(FD). Then we give some mathematical background:  \S\ref{sec:liealg} gives the definition of a simple Lie algebra over a field and states what is known about those which are finite-dimensional over an algebraically closed field; \S\ref{sec:thinlie} recalls the notion of a thin decomposition for a finite-dimensional simple Lie algebra over $\GF(2)$ and gives a useful plausibly weaker definition of a thin algebra. In \S\ref{sec:search} we describe our deployment of Prolog to find new simple Lie algebras and check uniqueness up to isomorphism. This gave us our seven new $31$-dimensional examples. Furthermore, extrapolating with the help of Prolog's guidance, we produced two new families of Lie algebras; these are explicitly described with generators and relations in \S\ref{sec:new} where we also we prove that they are simple. We give an updated table of the simple Lie algebras known over $\GF(2)$ in \S\ref{sec:bigtable}, where we are able to give more systematic descriptions to some of those already found.
	
	The GitHub repository \url{github.com/cushydom88/simple-lie-algebras} contains the code we used for this article and a library of known simple Lie algebras up to dimension 31, including our new discoveries.
	
	\section{Preliminaries}
		\subsection{Prologue on Prolog}\label{sec:prologprolog}
	The programming language Prolog was first developed in 1972 by Alain Colmerauer and Philippe Roussel\cite{prolog88}. The name is a portmanteau of the French \textit{Programmation en Logique} (programming in logic), and is associated with research in artificial intelligence\footnote{In the sense of traditional \textit{symbolic artificial intellgence} of the late 20th century, rather than the more modern machine-learning approach\cite{GARNELO201917}.} and computational linguistics.
	
	Presently there are several software packages available that implement various forms of the Prolog language. In this article, we provide code for both the commercial ISO compliant SICStus Prolog\cite{sicstus} and the free and open source.
	
	Unlike most programming languages, the logical programming paradigm of Prolog expresses the intent of the programmer in terms of relations between objects and queries on the resulting combined knowledge base, rather than a strictly ordered list of operations. The basis of Prolog's computation and the original source of its power is in the resolution of Horn clauses\cite{horn_1951}, which can be thought of as general statements for any of:
	\begin{enumerate}
		\item[(a)] \textit{facts} (``assume that the statement $x$ holds'');
		\item[(b)] \textit{implications} (``if $p$ holds then also $q$ holds'');
		\item[(c)] \textit{goals} (``show that $u$ holds'').
	\end{enumerate}
	A typical Prolog session involves the programmer loading statements in the form above into the software, then requesting the resolution of a single goal clause. This conversational style of loading code is referred to by Prolog programmers as \textit{consulting}. Once consulting is complete, Prolog aims to return a solution or otherwise return \texttt{false} or \texttt{no} if it determines that no solution exists. Such an approach makes Prolog (and other logic programming languages) particularly useful in the domain of symbolic mathematics and parsing applications.
	
	Constraint Logic Programming over Finite Domains\cite{clpfd}, also known as CLP(FD), is a software package that extends Prolog to be able to efficiently express and solve problems over finite subsets of the integers. Prolog's syntax and goal resolution routines are extended to include the ability to write clauses in terms of constraints and arithmetic relations between variables. Goal resolution is augmented using constraint programming, reducing the domain until a brute-force search for solutions is tractable. The brute-force component of the goal resolution is known as \emph{labelling}; constraint variables in a list are substituted with specific values from their domains and the implications of those substitutions traced through recursively until a solution is found, or the original value is discarded. Various labelling strategies are available depending on the outstanding domains of the unbound constraint variables. This package allows Prolog to be used to efficiently solve Boolean satisfiability problems (SAT) as well as other more general combinatorial problems. For the rest of this article and in demonstrations of Prolog code we will assume the CLP(FD) package has been loaded.
	
	Prolog has been successfully used to create type checkers\cite{lee2015foundations}, automatic theorem provers\cite{stickel1988prolog}, rewriting systems\cite{felty1991logic}, planning and schedulers\cite{schmid2003inductive}, constraint solvers\cite{colmerauer1987opening}, as well as successful in its original intended field of use of natural language processing\cite{pereira2002prolog,Lally2011NaturalLP}. However, Prolog has not found great popularity in the sphere of more general purpose programming\cite{somogyi1995logic}; its most well-known success story outside academia is probably in the form of IBM's Watson\cite{Lally2011NaturalLP}. Prolog powers Watson's natural language processing module and in 2011 it famously won the \textit{Man vs Machine challenge}, broadcast as part of the popular American television game show \textit{Jeopardy!}

	\subsection{Lie algebras}\label{sec:liealg}	
	We assume the reader is familiar with linear algebra and finite fields; probably \cite[\S2 \& \S4]{Cam98} should suffice.
	With that understanding, we give a short introduction to the category of Lie algebras and their representations, which should provide enough background to understand our results. For a more pedagogical treatment, one could read \cite{FH91} for finite-dimensional complex Lie algebras and classification through Dynkin diagrams; to learn about modular Lie algebras---those over fields of positive characteristic---one could consult \cite{SF88}.
	
	\subsubsection{Basic definitions} Let $k$ be a field. Then a Lie algebra $L$ is vector space over $k$ with a multiplication operation \[[\underline{\hspace{.25cm}},\underline{\hspace{.25cm}}]:L\times L\to L;\qquad (x,y)\mapsto [x,y],\]
	which is: antisymmetric (i.e.~$[x,x]=0$ for all $x\in L$); bilinear in both variables (so $(\lambda x+ y,z)=\lambda(x,z)+(y,z)$); and satisfies the Jacobi identity
	\[[x,[y,z]] = [[x,y],z] + [y,[x,z]], \qquad\text{for all }x\in L.\footnote{Compare the equation above with $\frac{\ud}{\ud x}(yz)=\frac{\ud}{\ud x}(y)z+y\frac{\ud}{\ud x}(z)$. One also notices that Lie algebras are not associative in general.}\]
	In this paper, all our Lie algebras will be finite-dimensional over $k$. The prototypical example of a Lie algebra is the set $\End_k(V)$ of $k$-linear maps on a vector space $V$ over $k$ under the operation $[\alpha,\beta](v)=\alpha\circ\beta(v)-\beta\circ\alpha(v)$. Picking a basis of the $n$-dimensional vector space $V$ gives an identification of $\End_k(V)$ with the Lie algebra $\gl_n$ of all $n\times n$ matrices over $k$ under the operation $[x,y]=xy-yx$.
	
	By way of a little history, Lie algebras were first considered by Sophus Lie as infinitesimal neighbourhoods of the identity of smooth transformation groups. In the more general capacity of the above definition, one finds Lie algebras threaded throughout pure mathematics and physics. The case when $k$ has positive characteristic---for example when $k$ is the field $\GF(2)$ of $2$ elements---has relevance mostly to pure mathematics, but such Lie algebras manage to put in an appearance in areas as diverse as the study of finite groups, number theory and geometric invariant theory, to name a few.
	
	\subsubsection{Lie subalgebras and ideals} A subspace $M$ of $L$ is called a \emph{(Lie) subalgebra} if $[x,y]\in M$ whenever $x,y\in M$. A subspace $I$ of $L$ is an an $\emph{ideal}$ of $L$ if $[x,I]\subseteq I$, for all $x\in L$, and we write $I\triangleleft L$; then for any $a\in L$, $a+I=\{a+x\mid x\in I\}$ is a \emph{coset} of $I$ and one checks that the set of all such cosets $L/I$ has the structure of a Lie algebra, via $[a+I,b+I]=[a,b]+I$. If there are no proper ideals of $L$---i.e. the only ideals of $L$ are $0$ and $L$ itself---then we say $L$ is \emph{simple}. One example of an ideal is the \emph{centre} $Z(L)=\{x\in L\mid [x,L]=0\}$; more generally, for any subset $U\subseteq L$, $Z_L(U)=\{x\in L\mid [x,U]=0\}$ is a subalgebra of $L$ called the \emph{centraliser} of $U$ in $L$. 
	
	At least when $L$ is finite-dimensional over $k$, one can associate to $L$ a list of simple Lie algebras that form the building blocks of $L$, called its \emph{components}. It is the statement of the Jordan--Holder theorem that for any sequence of non-trivial simple ideals $I_1\triangleleft L$, $I_2\triangleleft L/I_1$, $I_3\triangleleft L/(I_1+I_2)$ \dots, the complete unordered set $\{I_1,\dots, I_r\}$ is independent of the choices at each stage. This motivates an appetite to classify the simple Lie algebras.
	
	\subsubsection{Simple Lie algebras} Suppose $L$ is of finite dimension $d$ over $k$ and that $k$ is algebraically closed. In the specific case $k=\CC$ the classification of simple Lie algebras is a famous result of Killing (streamlined by Cartan, Weyl and Dynkin). They are in $1$--$1$ correspondence with their \emph{root systems}, and the possibilities for those root systems are as follows: $A_n$ $(n\geq 1)$, $B_n$ ($n\geq 2$), $C_n$ ($n\geq 3$), $D_n$ ($n\geq 4$), $G_2$, $F_4$, $E_6$, $E_7$ or $E_8$. Each of those root systems has an associated Dynkin diagram and the Lie algebras of the first $4$ families have straightforward descriptions in terms of matrix algebras; for example $A_n$ corresponds to the Lie algebra $\sl_{n+1}$ of traceless $(n+1)\times (n+1)$ matrices.
	
	The modular case, i.e. when the characteristic of $k$ is a prime $p$, is much more complicated. Since the Lie algebras above can be defined over $\Z$, they can be reduced mod $p$ to give Lie algebras defined over $k$, and they are largely still simple.\footnote{An example of the sort of issue that occurs is that if $p\mid n+1$ then the identity matrix $I$ in $\gl_{n+1}$ is of trace $n+1=0\pmod{p}$, hence lies in $\sl_{n+1}$; indeed it spans the centre $Z(\sl_{n+1})$. For $n>1$, however $\psl_{n+1}:=\sl_{n+1}/\langle I\rangle$, is simple.}  But there are many more. For example, the Lie algebra $W_1$ of derivations $\Der(\OO_1)$ of the truncated polynomial algebra $\OO_1:=F[x]/x^p$ is $p$-dimensional, having basis $\{\del,x\del,\dots,x^{p-1}\del\}$. Here $\del$ acts by differentiation and one can check that the Lie bracket is defined through $[x^i\del, x^j\del]=(j-i)x^{i+j-1}\del$. (We treat $x^i\del=0$ if $i>p-1$.) It turns out $W_1$ is simple if $p>2$, $W_1\cong\sl_2$ if $p=3$ and otherwise $W_1$ is not related to those of Dynkin type. 
	
	The good news is that the classification of $L$ up to isomorphism when $p\geq 5$ is available, due to Premet--Strade, and was announced in \cite{PS06}. The books \cite{SF88}, \cite{Str04}, \cite{Str09} and \cite{Str13} give this theory a lucid exposition. A rough description is as follows. For $\underline{n}=(n_1,\dots, n_m)$ a sequence of positive integers, one generalises $\OO_1$ to a divided power algebra $\OO(m;\underline{n})$ of dimension $p^{\sum n_i}$ in $m$ variables. Then $W_1$ is generalised to the  $mp^{\sum n_i}$-dimensional special derivations $W(m;\underline{n})$ of $\OO(m;\underline{n})$; these are the \emph{Jacobson--Witt} Lie algebras and are simple with a small list of exceptions in characteristics $2$ or $3$. Taking the subalgebras of elements of $W(m;\underline{n})$ stabilising certain differential forms gives the other algebras of \emph{Cartan type}: one gets additionally those of type $H$ (Hamiltonian), $S$ (Special) or $K$ (Contact). Lastly in characteristic $5$ one finds Melykian algebras of type $M$. Note that the exact classification in the case of the Hamiltonians is rather involved---\cite{Skr19}. Much of the machinery of the classification breaks down in characteristics $2$ and $3$, and while those known in characteristic $3$ sit in infinite families, the results we have in characteristic $2$ are at present rather  sporadic.
	
	Since we need the notion anyway, we mention that most of the algebras of Cartan type constructed above are not themselves simple and one needs something slightly smaller. It follows from the Jacobi identity that the span of the brackets of elements of $L$ is an ideal $L^{[1]}=[L,L]$ called the \emph{derived subalgebra}. In the cases above either $L^{[1]}$ or its second derived subalgebra $L^{[2]}$ is simple. If $L=[L,L]$ we say that $L$ is \emph{perfect}.
	
	\subsubsection{Homomorphisms and representations}\label{homs} If $L$ and $L'$ are Lie algebras over the field $k$ and $\phi:L\to L'$ a linear map satisfying $\phi([x,y])=[\phi(x),\phi(y)]$ then we say $\phi$ is a \emph{homomorphism} of Lie algebras. It is easily checked that $\ker(\phi)$ is an ideal of $L$ and that $\im(\phi)$ is a subalgebra of $L'$; if $\ker(\phi)=0$ then $\phi$ is called an \emph{embedding}, and if $\im(L)=L'$ then $L$ is a \emph{quotient} map. If $\phi$ is invertible then it is an \emph{isomorphism} and we write $L\cong L'$; for example, when $\phi:L\to L'$ is a homomorphism, one has $L/\ker(\phi)\cong\im(\phi)$. An isomorphism from $L$ to $L$ is called an \emph{automorphism}. (The group of all automorphisms is a useful invariant of $L$.)
	
	In the special case $L'=\End(V)$ for $V$ a $k$-vector space, then we say $V$ is a \emph{representation of $L$}, or an \emph{$L$-module} (in which case we suppress mention of $\phi$); if $\ker(\phi)=0$ then $V$ is \emph{faithful}, whereas if $\ker(\phi)=L$ then $V$ is \emph{trivial}. An $L$-stable subspace $U\subseteq V$ is called a submodule, and the quotient $V/U$ of vector spaces inherits the structure of another $L$-module. We say $V$ is \emph{irreducible} if $0$ and $V$ are its only submodules; thus the $1$-dimensional trivial module $k$ is irreducible. There is always a canonical representation of $L$ called the \emph{adjoint representation}, $\ad:L\to\End(L)$; $x\mapsto \ad(x)$, where $\ad(x)y=[x,y]$; the Jacobi identity implies this is a homomorphism. The kernel of $\ad$ identifies with the centre $Z(L)$; so in particular if $L$ is simple, then $\ad$ is faithful and we have $L\cong \ad(L)$, meaning we can regard $L$ as a subalgebra of $\End(L)$.
	
%
	
	\subsubsection{Derivations and $p$-envelopes} The \emph{algebra of derivations} of $L$ is the Lie subalgebra of $\End(L)$ defined by $\Der(L)=\{d\in\End(L)\mid d([x,y])=[d(x),y]+[x,d(y)]\}$. By Jacobi, each $\ad(x)$ for $x\in L$ is a derivation we call \emph{inner}; equally by Jacobi, one has $\ad(L)\subseteq\Der(L)$ an ideal. For simple complex Lie algebras all derivations are inner, but this is far from true for modular Lie algebras. Indeed if $\Char k=p>0$ and $x\in L$, then one can check that $\ad(x)^p$ is also a derivation of $L$. If for each $x\in L$, $\ad(x)^p=\ad(x^{[p]})$ for some $x^{[p]}\in L$, then $L$ is \emph{restricted}; the algebras $\Der(L)\subseteq\End(L)$ are restricted under the map $d\mapsto d^p$. Assume $L\cong\ad(L)$. Then recursively adding in the span of the $p$-th powers to $\ad(L)$ must eventually stabilise in a restricted subalgebra $\mathscr{L}$ of $\End(V)$, called the \emph{minimal $p$-envelope} of $L$. One often has $\mathscr{L}=\Der(L)$, but in general $\mathscr{L}$ is a proper subalgebra of $\Der(L)$. 
	
	On the other hand, a representation $\rho:\mathscr{L}\to\End(V)$ is \emph{restricted} if for all $x\in\mathscr{L}$ one has $\rho(x^{[p]})=\rho(x)^p$. A representation of $\rho:L\to\End(V)$ is called restricted if it factors through a restricted representation of $\mathscr{L}$. See \cite[\S2]{SF88} for a far less laconic treatment.
	
	\subsubsection{Tori and roots}\label{sec:tor}Keep the assumption $L=\ad(L)\subseteq \gl(L)$ and $\Char k=p>0$. If $x^p=0$, (resp.~$x^{p^i}=0$ for some $i$) we say $x$ is \emph{$p$-nilpotent}, (resp.~\emph{nilpotent}) and if $x^p=x$ we say $x$ is \emph{toral}. A subalgebra $T$ of $L$ with a basis of toral elements is automatically commutative and is called a \emph{torus}; the \emph{rank} of $T$ is its dimension. Usually one is interested in $T$ of maximal dimension in $\mathscr{L}$, in which case the integer $\mathrm{TR}(L):=\dim T$ is called the \emph{absolute toral rank of $L$}. Any restricted $T$-module has a basis of simultaneous eigenvectors for $T$. If $v$ is one such, then $t(v)=\lambda(t)v$ for some linear map $\lambda:T\to \F_p$, which we call a \emph{weight}; and we say $v$ is a \emph{weight vector}. The set of all $v\in V$ such that $t(v)=\lambda(v)$ is the \emph{$\lambda$-weight space} of $V$. In particular, the non-zero weights of $T$ on $L$ are called \emph{roots} and we denote by $\Phi$ the set of these; a weight vector for this action of weight $\alpha\in \Phi$ is a \emph{root vector}; a weight space for $\alpha\in\Phi$ is a \emph{root space}. Crucially, if $e_\alpha$ and $e_\beta$ are root vectors corresponding to roots $\alpha$ and $\beta$, we have $[t,[e_\alpha,e_\beta]]=[[t,e_\alpha],e_\beta]+[e_\alpha,[t,e_\beta]]=(\alpha+\beta)(t)[e_\alpha,e_\beta]$ so that `roots add under the Lie bracket'.

\subsection{Toral switching}\label{sec:torswitchthy} Over $\mathbb C$, maximal diagonalisable subalgebras of a simple Lie algebra $L$ are \emph{Cartan subalgebras}, and these are all conjugate under the isomorphism group of $L$, and in particular are all of the same dimension. This statement is far from true in characteristic $p$; however toral switching is an effective replacement, and it provides a very powerful method to generate isomorphisms between Lie algebras. The basic idea is to take a torus $T$ of maximum dimension in $\mathscr{L}$, a root $\alpha$ of $T$ on $\mathscr{L}$, and a $p$-nilpotent root element $x\in L_\alpha$. Then $\{t+\alpha(t)x\mid t\in T\}$ turns out to be another torus of maximal dimension in $\mathscr{L}$. In full generality over algebraically closed fields, Premet \cite{Pre89} has shown that one may get from any torus of maximum dimension in $\mathscr{L}$ to any other after a finite set of switches; see \cite[\S1.5]{Str04} for more detail.  We use toral switching in a more limited way to create in Prolog a graph of toral switching isomorphisms between certain sets of data generating Lie algebras. See \S\ref{sec:tor} below.
	
	\subsection{Thin Lie algebras}\label{sec:thinlie} For the time being, we let $k=\GF(2)$, the field of two elements.
	
	Following \cite[\S3.4]{GGRZ22}, suppose $L$ is a Lie algebra over $\GF(2)$ of dimension $2^n-1$ with absolute toral rank $\mathrm{TR}(L)=n$. Take a maximal torus $T\subset\mathscr{L}$, and assume it has the set of roots $\Phi=\GF(2)^n\setminus\{\underline{0}\}$ on $L$. Then each root space is $1$-dimensional, $Z_L(T)=0$ and in particular, $T\cap L=0$. In this scenario, we say $L$ has a \emph{thin decomposition}. Note that  by the comments in \S\ref{sec:tor}, if $e_\alpha$ and $e_\beta$ span the root spaces corresponding to roots $\alpha$ and $\beta$ respectively then we have: \begin{equation}\text{either }[e_\alpha,e_\beta]=e_{\alpha+\beta}\text{ or }[e_\alpha,e_\beta]=0.\label{thineqn}\end{equation}  On the other hand, we say a $(2^n-1)$-dimensional Lie algebra $L$ over $\GF(2)$ is \emph{thin} if the weaker condition holds that $L$ is spanned by $2^n-1$ vectors $e_\alpha$ with $\alpha\in\Phi$ satisfying (\ref{thineqn}). Indeed, suppose $L$ is thin and $\{\alpha_i\mid 1\leq i\leq n\}$ the standard basis for $\GF(2)^n$. Then (\ref{thineqn}) implies that the endomorphisms $d_i$ of $L$ defined on $e_\alpha$ via $d_i(e_\alpha)=(\alpha,\alpha_i) e_{\alpha}$ give a commuting set of $n$ linearly independent toral elements of $\Der(L)$; if $\Der(L)=\mathscr{L}$ then the $d_i$ would exhibit a torus giving a thin decomposition. 
	
	To record:
	\begin{lemma}\label{thinvsthin} Let $L$ be a Lie algebra over $\GF(2)$ of dimension $2^n-1$. Then $L$ is thin if and only if there is an $n$-dimensional torus $T\subset\Der(L)$ such that $L$ decomposes for $T$ with root system $\Phi=\GF(2)^n\setminus\{\underline{0}\}$.\end{lemma}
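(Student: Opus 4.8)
The plan is to prove the two implications directly from the definitions, the bridge in each direction being the ``roots add under the Lie bracket'' identity of \S\ref{sec:tor} together with a dimension count.

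\textbf{A torus gives thinness.} Suppose $T\subset\Der(L)$ is an $n$-dimensional torus whose root system on $L$ is $\Phi=\GF(2)^n\setminus\{\underline 0\}$. Since $L$ is the natural module for the restricted Lie algebra $\Der(L)\subseteq\End(L)$, it is a restricted $T$-module and so (by \S\ref{sec:tor}) decomposes into weight spaces $L=L_0\oplus\bigoplus_\lambda L_\lambda$; the hypothesis is that the nonzero weights are exactly $\Phi$. As $|\Phi|=2^n-1=\dim L$ and each $L_\alpha$ with $\alpha\in\Phi$ is nonzero, this already forces $L_0=0$ and $\dim L_\alpha=1$ for all $\alpha\in\Phi$. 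Pick $0\neq e_\alpha\in L_\alpha$; these form a basis, and by \S\ref{sec:tor} we have $[e_\alpha,e_\beta]\in L_{\alpha+\beta}$. If $\alpha=\beta$ then $[e_\alpha,e_\alpha]=0$; otherwise $\alpha+\beta\in\Phi$, so $L_{\alpha+\beta}=\GF(2)\,e_{\alpha+\beta}$ and hence $[e_\alpha,e_\beta]\in\{0,\,e_{\alpha+\beta}\}$. This is precisely (\ref{thineqn}), so $L$ is thin.

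\textbf{Thinness gives a torus.} Conversely, suppose $L$ is thin, spanned by $2^n-1$ vectors $e_\alpha$ ($\alpha\in\Phi$) obeying (\ref{thineqn}); since $|\Phi|=2^n-1=\dim L$, these are a basis. Writing $(\,\cdot\,,\,\cdot\,)$ for the standard symmetric bilinear form on $\GF(2)^n$ and $\alpha_1,\dots,\alpha_n$ for its standard basis, I would define $d_i\in\End(L)$ by $d_i(e_\alpha)=(\alpha,\alpha_i)\,e_\alpha$ and check: (i) each $d_i$ is a derivation; (ii) $d_i^2=d_i$ and the $d_i$ pairwise commute, so that every $\GF(2)$-combination of them is toral; (iii) the $d_i$ are linearly independent. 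Of these, (i) is the one step using (\ref{thineqn}): on basis elements the Leibniz rule is automatic when $\alpha=\beta$, while for $\alpha\neq\beta$, writing $[e_\alpha,e_\beta]=c\,e_{\alpha+\beta}$ with $c\in\GF(2)$, it reduces to the identity $c(\alpha+\beta,\alpha_i)=c\bigl((\alpha,\alpha_i)+(\beta,\alpha_i)\bigr)$, that is, $\GF(2)$-bilinearity of $(\,\cdot\,,\,\cdot\,)$ in its first slot; (ii) uses $(\alpha,\alpha_i)\in\{0,1\}$ and the vanishing of cross-terms in characteristic $2$; (iii) is seen by evaluating on $e_{\alpha_1},\dots,e_{\alpha_n}$. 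Granting these, $T:=\mathrm{span}\{d_1,\dots,d_n\}$ is an $n$-dimensional torus in $\Der(L)$; since every element of $T$ has the form $\sum_{i\in S}d_i$ for $S\subseteq\{1,\dots,n\}$ and $\bigl(\sum_{i\in S}d_i\bigr)(e_\alpha)=\bigl(\alpha,\sum_{i\in S}\alpha_i\bigr)e_\alpha$, the weight of $e_\alpha$ read off on the basis $d_1,\dots,d_n$ is $\alpha$ itself, whence $\alpha\mapsto(\text{weight of }e_\alpha)$ is a bijection $\GF(2)^n\to T^*$ carrying $\Phi$ onto the nonzero weights. In particular no $e_\alpha$ has zero weight, so $Z_L(T)=0$, each root space is one-dimensional, and $L$ decomposes for $T$ with root system $\Phi$.

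\textbf{Main obstacle.} I do not anticipate a genuine difficulty: the lemma essentially repackages the discussion preceding its statement, and the work is bookkeeping. The point that wants care is step (i) --- the Leibniz rule for $d_i$ genuinely uses both that $[e_\alpha,e_\beta]$ is a scalar multiple of $e_{\alpha+\beta}$ (the content of (\ref{thineqn})) and $\GF(2)$-bilinearity, so the case split $\alpha=\beta$ versus $\alpha\neq\beta$ should be written out rather than waved at. A second point worth stating cleanly, and used in both directions, is that the phrase ``$L$ decomposes for $T$ with root system $\Phi$'' already forces $Z_L(T)=0$ and one-dimensional root spaces, purely because $|\Phi|=2^n-1=\dim L$; it is this equality of counts that makes the weaker notion of thinness coincide on the nose with the torus description.
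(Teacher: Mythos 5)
Your proof is correct and is essentially the paper's own argument: the paper records this lemma as a summary of the immediately preceding discussion, where the forward direction comes from the ``roots add under the bracket'' identity plus the count $|\Phi|=2^n-1=\dim L$ forcing $Z_L(T)=0$ and one-dimensional root spaces, and the converse is exactly your construction of the commuting toral derivations $d_i(e_\alpha)=(\alpha,\alpha_i)e_\alpha$. You have simply written out in full the bookkeeping (Leibniz rule, torality, independence) that the paper leaves implicit.
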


\begin{rmk}It is Conjecture 3 in \cite[\S7]{GGRZ22} that all simple Lie algebras with a thin decomposition have the property that $\Der(L)$ is a semidirect product $T\ltimes L$; i.e. that $\Der(L)=T\oplus L$ as vector spaces and with bracket $[(t,x),(s,y)]=([t,s],t(y)-s(x)+[x,y])$. Unfortunately, the $15$-dimensional algebra denoted `Kap$_2(4)$' which is isomorphic to `Kap$_3(6)$' has a thin decomposition, but its derivation algebra is dimension $20$.\end{rmk}
	
	A surprisingly high proportion of the simple Lie algebras of dimension $2^n-1$ are thin, and it is hypothesised that this explains the spikes in isomorphism classes of simple Lie algebras at dimension $2^n-1$.
	
	There is a significantly smaller amount of data needed to define a thin Lie algebra than a general one.
	\begin{defin}\label{thintable} Take a matrix $T$ of size $(2^n-1)\times (2^n-1)$ with columns and rows labelled by the roots $\alpha\in\Phi$ and with entries $T_{\alpha,\beta}\in \Z/2$. Then $T$ is called a \emph{thin table} if the bracket $[e_\alpha,e_\beta]=T_{\alpha,\beta}e_{\alpha+\beta}$ defines a Lie algebra on the $(2^n-1)$-dimensional vector space with basis $\{e_\alpha\mid\alpha\in\Phi\}$.\end{defin}
	
	\begin{ex}\label{fsl2}The following is a particularly important thin table
		\begin{center}\begin{tabular}{c|c|c|c}
				& $\alpha$ & $\beta$ & $\alpha+\beta$\\
				\hline
				$\alpha$ & $0$ & $1$ & $1$\\
				$\beta$ & $1$ & $0$ & $1$\\
				$\alpha+\beta$ & $1$ & $1$ & $0$\end{tabular},\end{center}
		where $\alpha=(1,0)$, $\beta=(0,1)$ and $\alpha+\beta=(1,1)$ in $\GF(2)^2\setminus\{(0,0)\}$. This describes a $3$-dimensional simple Lie algebra $\s$ with basis $x,y,z$ such that $[x,y]=z$, $[z,x]=y$ and $[y,z]=x$. It is easy to see it is simple. It is in fact the unique simple Lie algebra of dimension $3$ over any field of characteristic $2$. In characteristics other than $2$, this role is played by the algebra $\sl_2$ of $2\times 2$ traceless matrices which is not simple in characteristic $2$. This motivates the name `fake $\sl_2$' for $\s$, henceforth $\fsl_2$.   \end{ex}
	
	\begin{rmk} Another useful basis of $\fsl_2$ is $\{\del,x\del,x^{(2)}\del\}$, which spans the Lie derived subalgebra $W(1;(2))'$ of the special derivations $W(1;(2))$ of the divided power algebra $\OO(1,(2))=\langle 1,x,x^{(2)},x^{(3)}\rangle$. One has $[\del,x\del]=\del$, $[\del,x^{(2)}\del]=x\del$ and $[x\del,x^{(2)}\del]=x^{(2)}\del$. In particular, the element $x\del$ is toral. Since $\ad(\del)^2(x^{(2)}\del)=\del$, the derivation $\ad(\del)^2$ is not inner, thus $\fsl_2$ is not restricted. Letting $F:=\ad(\del)^2$ and $E:=\ad(x^{(2)}\del)^2$, one checks $\langle F,\del,x\del,x^{(2)}\del,E\rangle$ is a minimal $p$-envelope; and $\langle F,x\del,E\rangle$ a subalgebra isomorphic to $\sl_2$.\end{rmk}
	
	For any root $\alpha\in\Phi$ we can decompose $L$ as $L=L_0\oplus L_1$ where $L_i$ is spanned by the $e_\beta$ such that the inner product of $\beta$ with $\alpha$ in $\GF(2)^n$ is $i$. Taking $\alpha=(1,0)$ in Example \ref{fsl2}, we get $L_0=\langle e_{(0,1)}\rangle$ and $L_1=\langle e_{(1,0)},e_{(1,1)}\rangle$. Since `roots add under the Lie bracket', we have $[L_0,L_0]\subseteq L_0$, $[L_1,L_1]\subseteq L_0$ and $[L_0,L_1]\subseteq L_1$. A decomposition $L=L_0\oplus L_1$ satisfying these constraints is called a \emph{$\Z/2$-grading}. The following result gives rise to very strong constraints to feed into Prolog.
	
	\begin{lemma}\label{l0l1} Suppose $L\cong L_0\oplus L_1$ is a $\Z/2$-grading of a simple Lie algebra $L$ with $L_1\neq 0$. Then $[L_1,L_1]=L_0$ and $L_0$ acts faithfully on $L_1$.\end{lemma}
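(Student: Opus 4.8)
The plan is to produce, in each case, a non-zero ideal of $L$ that is contained in a proper graded subspace, and then invoke simplicity to force the conclusion. Throughout I will use only the graded bracket relation $[L_a,L_b]\subseteq L_{a+b}$ (indices in $\Z/2$) established in the remark preceding the statement, together with the Jacobi identity.

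For the identity $[L_1,L_1]=L_0$: the inclusion $[L_1,L_1]\subseteq L_0$ is immediate from the grading, so I set $J:=[L_1,L_1]+L_1$ and claim $J\triangleleft L$. Checking this is the only computation of substance. On the one hand $[L_1,J]\subseteq [L_1,L_1]+[L_1,[L_1,L_1]]\subseteq J$, using $[L_1,[L_1,L_1]]\subseteq[L_1,L_0]\subseteq L_1$; on the other hand $[L_0,J]\subseteq[L_0,[L_1,L_1]]+[L_0,L_1]$, where $[L_0,L_1]\subseteq L_1$ and, by Jacobi, $[L_0,[L_1,L_1]]\subseteq [[L_0,L_1],L_1]+[L_1,[L_0,L_1]]\subseteq[L_1,L_1]$. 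Hence $J$ is an ideal; since $L_1\neq 0$ we have $J\neq 0$, so simplicity gives $J=L$. As the sum $L=L_0\oplus L_1$ is direct and $J=[L_1,L_1]\oplus L_1$ has its degree-$0$ and degree-$1$ parts equal to $[L_1,L_1]$ and $L_1$, comparing degree-$0$ parts of $J=L$ yields $L_0=[L_1,L_1]$.

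For faithfulness of $L_0$ on $L_1$, let $K:=\{x\in L_0 : [x,L_1]=0\}$ be the kernel of this action. I claim $K$ is an ideal of $L$. By definition $[L_1,K]=0\subseteq K$, and for $y\in L_0$, $x\in K$ the Jacobi identity gives $[[y,x],L_1]\subseteq[[y,L_1],x]+[y,[x,L_1]]=[[y,L_1],x]\subseteq[L_1,x]=0$, so $[y,x]\in K$; thus $[L,K]\subseteq K$. Since $L_1\neq 0$, $K\subseteq L_0$ is a proper subspace of $L$, so simplicity forces $K=0$, which is exactly the asserted faithfulness. (Alternatively, once $[L_1,L_1]=L_0$ is in hand, one sees directly that any $x\in K$ satisfies $[x,L_0]=[x,[L_1,L_1]]\subseteq[[x,L_1],L_1]+[L_1,[x,L_1]]=0$, whence $x\in Z(L)=0$.)

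I do not expect a genuine obstacle here: the argument is the standard observation that the only graded subspaces naturally attached to the grading are forced to be everything or nothing, and the sole place where care is needed is verifying that $J$ and $K$ really are ideals, i.e.\ keeping track of degrees under the bracket and applying the Jacobi identity once in each case. The hypothesis $L_1\neq 0$ enters precisely to guarantee $J\neq 0$ and $K\subsetneq L$.
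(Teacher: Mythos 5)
Your proof is correct and follows essentially the same route as the paper: the first claim via the ideal $[L_1,L_1]+L_1$ and simplicity, the second by showing the centraliser of $L_1$ in $L_0$ is an ideal of $L$ (the paper builds this ideal iteratively from a single centralising element, you take the whole kernel $K$ at once, which is the same idea and if anything slightly cleaner). Your explicit Jacobi-identity checks simply fill in details the paper leaves to the reader.
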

	\begin{proof}From $[L_0,L_1]\subseteq L_1$ and $[L_1,L_1]\subseteq L_0$ one concludes that $[L_1,L_1]+L_1$ is an ideal; as $L$ is simple and $L_1\neq 0$, we therefore deduce $L_0=[L_1,L_1]$. If $0\neq y\in L_0$ centralises $L_1$ then so does $y+[y,L_0]$, by the Jacobi identity. Therefore the sum $ky+[y,L_0]+[[y,L_0],L_0]+\dots$ stabilises in an ideal of $L_0$ centralising $L_1$, hence a proper ideal of $L$. This contradicts the simplicity of $L$.\end{proof}

	\section{Searching for simple thin Lie algebras with Prolog}\label{sec:search}
Here we describe the code used to generate the results in this paper. Since Prolog's declarative code contrasts significantly with more well-known imperative languages such as C, Python, GAP and so on, we introduce it slowly:
\begin{enumerate}\item In \S\ref{sec:toy} we give a illustrative toy version of our code that we hope to be pedagogically expedient; this outputs all thin tables in dimension $7$, most of which are not simple.
\item In \S\ref{sec:simpcheck} we explain how to use the MeatAxe in GAP \cite{GAP} to discard those tables describing algebras which are not simple. (In the full version of the Prolog code, we end up deploying enough constraints that all the tables output are found to be simple in GAP.)
\item In \S\ref{sec:2nilptor} we describe the Prolog code we use to count the number of $2$-nilpotent and toral elements in the Lie algebras we generate. Since these numbers are invariant under isomorphism, Lie algebras with differing numbers of toral or $2$-nilpotent elements must be non-isomorphic.
\item In the next few sections, \S\ref{sec:CushCode}--\ref{sec:tor}, we talk through the full version of the Prolog code. This is coordinated by a master predicate {\tt reduced\_thin\_search} which first invokes an initial search. This is followed by post-processing to remove some non-simple tables. Next, symmetry-breaking through lexicographic ordering refines the list by pruning isomorphism classes\footnote{Suppose $\mathcal{S}$ is a set of solutions to a set of constraints $\mathcal{C}$ and let $\sim$ be an equivalence relation among the elements of $\mathcal{S}$; for example $\sim$ may partition $\mathcal{S}$ into subsets  which describe isomorphic stuctures. Let $\mathcal{S'}\subseteq \mathcal{S}$ satisfy an additional constraint $C$. Then $C$ is said to \emph{break symmetry} [of the relation $\sim$] if the number of equivalence classes of $\mathcal{S}'$ and $\mathcal{S}$ under $\sim$ are the same, but $\mathcal{S}'$ is smaller than $\mathcal{S}$.}. Finally, the technique of toral switching is used to collect these into a very small number of sets of isomorphic algebras.
\item In almost all cases the sets above can be shown to contain pairwise distinct Lie algebras by counting $2$-nilpotent and toral elements. In just a couple of cases in each dimension there is some bespoke work needed to finish the calculation and we describe this in \S\ref{sec:finalfinal}.
\end{enumerate}

	\subsection{Creating the search: toy version}\label{sec:toy}
Let us start by illustrating some more primitive code to generate \emph{all} thin tables in case $\dim L=7$, most of which will not describe simple Lie algebras. 
	
	Prolog's CLP(FD) library will operate on collections of variables and constraints. We set up the relevant variables according to the following $7\times 7$ table, $T$:
	\begin{center}
		\begin{tabular}{ c|c|c|c|c|c|c|c| } 
			& ${001}$ & ${010}$ & ${011}$ & ${100}$ & ${101}$ & ${110}$ & ${111}$ 
			\\ 
			\hline
			${001}$ & $A$ & $B$ & $C$ & $D$ & $E$ & $F$ & $G$ 
			\\ 
			\hline
			${010}$ & $H$ & $I$ & $J$ & $K$ & $L$ & $M$ & $N$ 
			\\ 
			\hline
			${011}$ & $O$ & $P$ & $Q$ & $R$ & $S$ & $T$ & $U$  
			\\ 
			\hline
			${100}$ & $V$ & $W$ & $X$ & $Y$ & $Z$ & $AA$ & $AB$  
			\\ 
			\hline
			${101}$ & $AC$ & $AD$ & $AE$ & $AF$ & $AG$ & $AH$ & $AI$  
			\\ 
			\hline
			${110}$ & $AJ$ & $AK$ & $AL$ & $AM$ & $AN$ & $AO$ & $AP$ 
			\\ 
			\hline
			${111}$ & $AQ$ & $AR$ & $AS$ & $AT$ & $AU$ & $AV$ & $AW$ 
			\\ 
			\hline
		\end{tabular}
	\end{center}
	and each variable takes a value in $\GF(2)$. (Note that Prolog's syntax demands that variables start with capital letters.) The constraints necessary for $T$ to be a thin table (see Definition \ref{thintable}) are as follows:
	
	\subsubsection{Zero on the diagonals}
	For all $\alpha$ we have $[e_\alpha, e_\alpha] = 0.$ Namely,
	$$A = I = Q = Y = AG = AO = AW = 0.$$
	
	\subsubsection{Symmetry of the table}
	For all $\alpha, \beta\in\Phi$, we have $[e_\alpha, e_\beta] = [e_\beta, e_\alpha]$. For example $B=H.$
	
	\subsubsection{The Jacobi identity}
	For all $\alpha, \beta, \gamma$ we have
	$$[e_\alpha,[e_\beta,e_\gamma]] + [e_\beta,[e_\gamma,e_\alpha]] + [e_\gamma,[e_\alpha,e_\beta]]  = 0.$$
	For example, taking $\alpha = (0,0,1), \beta = (0,1,0)$ and $\gamma = (1,0,0)$, we get
	$$F\cdot K + L\cdot D + R\cdot B = 0.$$

Then copying and pasting the code from Figure \ref{code7} into the command line of SWI-Prolog yields a total of $2781$ possible thin tables in dimension $7$.

\begin{figure}
\begin{center}
\begin{lstlisting}
?- Vars=[A, B, C, D, E, F, G, H, I, J, K, L, M, N, O, P, Q, R, S, T, U, 
V, W, X, Y, Z, AA, AB, AC, AD, AE, AF, AG, AH, AI, AJ, AK, AL, AM, AN,
AO, AP, AQ, AR, AS, AT, AU, AV, AW], Vars ins 0..1,
		
% Zero on the diagonals:
A #= 0, I #= 0, Q #= 0, Y #= 0, AG #= 0, AO #= 0, AW #= 0,
		
% Symmetry of the table:
H #= B, O #= C, V #= D, AC #= E, AJ #= F, AQ #= G, P #= J, W #= K, 
AD #= L, AK #= M, AR #= N, X #= R, AE #= S, AL #= T, AS #= U, AF #= Z, 
AM #= AA, AT #= AB, AN #= AH, AU #= AI, AV #= AP,
		
% The Jacobi identity:
((B*Q + J*A + O*I) mod 2) #= 0, ((B*R + K*AJ + V*AD) mod 2) #= 0,
((B*S + L*AQ + AC*W) mod 2) #= 0, ((B*T + M*V + AJ*AR) mod 2) #= 0,
((B*U + N*AC + AQ*AK) mod 2) #= 0, ((C*K + R*AQ + V*AE) mod 2) #= 0,
((C*L + S*AJ + AC*X) mod 2) #= 0, ((C*M + T*AC + AJ*AS) mod 2) #= 0,
((C*N + U*V + AQ*AL) mod 2) #= 0, ((D*AG + Z*A + AC*Y) mod 2) #= 0,
((D*AH + AA*H + AJ*AT) mod 2) #= 0, ((D*AI + AB*O + AQ*AM) mod 2) #= 0,
((E*AA + AH*O + AJ*AU) mod 2) #= 0, ((E*AB + AI*H + AQ*AN) mod 2) #= 0,
((F*AW + AP*A + AQ*AO) mod 2) #= 0, ((J*D + R*AR + W*AL) mod 2) #= 0,
((J*E + S*AK + AD*AS) mod 2) #= 0, ((J*F + T*AD + AK*X) mod 2) #= 0,
((J*G + U*W + AR*AE) mod 2) #= 0, ((K*AN + Z*B + AD*AT) mod 2) #= 0,
((K*AO + AA*I + AK*Y) mod 2) #= 0, ((K*AP + AB*P + AR*AF) mod 2) #= 0,
((L*AV + AH*P + AK*Z) mod 2) #= 0, ((L*AW + AI*I + AR*AG) mod 2) #= 0,
((M*AB + AP*B + AR*AH) mod 2) #= 0, ((R*AU + Z*C + AE*AM) mod 2) #= 0,
((R*AV + AA*J + AL*AF) mod 2) #= 0, ((R*AW + AB*Q + AS*Y) mod 2) #= 0,
((S*AO + AH*Q + AL*AG) mod 2) #= 0, ((S*AP + AI*J + AS*Z) mod 2) #= 0,
((T*AI + AP*C + AS*AA) mod 2) #= 0, ((Z*F + AH*R + AM*L) mod 2) #= 0,
((Z*G + AI*K + AT*S) mod 2) #= 0, ((AA*N + AP*D + AT*T) mod 2) #= 0,
((AH*U + AP*E + AU*M) mod 2) #= 0,

findall(Vars,label(Vars),Bag), length(Bag,NrSols), writeln(NrSols).
\end{lstlisting}
\end{center}
\caption{SWI-Prolog code for dimension 7 thin tables\label{code7}}
\end{figure}
		
	\subsection{Check for simplicity}\label{sec:simpcheck}
	The Meataxe is a library of routines implemented in GAP and Magma whose principal functionality is in testing for the irreducibility of modules for matrix algebras, based on Norton's Irreducibility Test. For a Lie algebra $L$, the adjoint representation $\ad:L\to \gl(L)$ is reducible if and only if there is a proper subspace $I$ in $L$ which is stable under $\ad(x)$ for all $x\in L$, i.e. if and only if $I$ has a proper ideal. (See \S\ref{homs} for background.) Given any thin table $T$ provided by Prolog, we constructed in GAP the $(2^{n}-1)\times (2^n-1)$ adjoint matrices of the $2^{n}-1$ basis elements, via $\ad(e_\alpha)e_\beta=T_{\alpha,\beta} e_{\alpha+\beta}$ for each $\alpha,\beta\in\Phi$. The Meataxe then detects simplicity or otherwise.
	
	The code for converting a thin table into a Lie algebra in GAP and checking its simplicity with the MeatAxe is reproduced in Appendix \ref{app:meataxe}.
	
	\subsection{Invariants: 2-nilpotent and toral elements}\label{sec:2nilptor}
	Let $L$ be a Lie algebra over a field $k$ of characteristic $p$. Then an element $x$ of $L$ is \emph{$p$-nilpotent} if $\ad(x)^p = 0.$ The $p$-nilpotent elements $N_L$ of a Lie algebra $L$ form an affine subscheme of $L$ which can be explicitly described. Let $\{e_1\dots, e_r\}$ be a basis of $L$. The adjoint matrix of a general element $x=\sum\lambda_ie_i$ is $X:=\sum \lambda_iE_i$, where $E_i$ is the adjoint matrix of $e_i$. Treating the $\lambda_i$ as variables gives $X$ the status of a $n\times n$ matrix with entries in  the polynomial algebra $R:=k[\lambda_1,\dots,\lambda_r]$ over $k$. The scheme $N_L$ is defined by the ideal $I$ generated by the entries $\{(X^p)_{ij}\mid 1\leq i,j\leq r\}$ of $X^p$. A $k$-point $x\in N_L(k)$ is by definition a $k$-algebra map $R\to k$ which factors through $I$; in a more down-to-earth description, it is just a choice of $(x_1,\dots,x_r)\in k^r$ such that evaluating $\lambda_k$ at $x_k$ in each $(X^p)_{ij}$ returns zero. If $k$ is finite, then the number of $k$-points $|N_L(k)|$ is an invariant of $L$, in the sense that if $L\cong L'$ then $|N_L(k)|=|N_{L'}(k)|$.
	
	Now assume $k=\GF(2)$ and $L$ is a thin Lie algebra on the basis $\{e_\alpha\mid\alpha\in\Phi\}$. Then we wish to count $|N_L(k)|$, i.e. to count the points of the variety $N_L$ over the finite field $k$. Counting points on varieties over finite fields is a very classical problem and there are various systems available for doing this calculation; but the state of the art methods for available to us in Magma, GAP or Singular (Sage) were unable to compute the number of $2$-nilpotent elements in any of the $31$-dimensional thin Lie algebras within 4 hours (which is when we killed the programs).
		
	Prolog, by contrast, was able to count $2$-nilpotent elements in under a second. 
	
	Let us provide a toy example to demonstrate this process using $\mathfrak{fsl}_2$. Recall its thin table from Example \ref{fsl2}.	One calculates that the basis elements have adjoint matrices
	$$
	E_{\alpha} = \begin{pmatrix}
	0 & 0 & 0\\
	0 & 0 & 1\\
	0 & 1 & 0
	\end{pmatrix},\:
	E_{\beta} = \begin{pmatrix}
	0 & 0 & 1\\
	0 & 0 & 0\\
	1 & 0 & 0
	\end{pmatrix},\:
	E_{\alpha+\beta} = \begin{pmatrix}
	0 & 1 & 0\\
	1 & 0 & 0\\
	0 & 0 & 0
	\end{pmatrix}.$$
	Thus a general element $x\in \mathfrak{fsl}_2$ is 
	
	$$ x = \begin{pmatrix}
	0 & c & b\\
	c & 0 & a\\
	b & a & 0
	\end{pmatrix},$$
	with $a,b,c\in \GF(2).$ Now,
	$$ x^2 = \begin{pmatrix}
	b^2+c^2 & ab & ac\\
	ab & a^2+c^2 & bc\\
	ac & bc & a^2+b^2
	\end{pmatrix}.$$
	Therefore setting $x^2 = 0$ gives the following 6 equations
	$$ab = ac = bc = a^2+b^2 = a^2+c^2 = b^2+c^2 = 0.$$
	Translated into Prolog code, this is:
	\begin{lstlisting}
	?- [A,B,C] ins 0..1, 
	A*B #= 0,
	A*C #= 0,
	B*C #= 0,
	((A^2+B^2) mod 2) #= 0,
	((A^2+C^2) mod 2) #= 0,
	((B^2+C^2) mod 2) #= 0,
	label([A,B,C]).
	\end{lstlisting}
	(In this case, the only solution is the trivial solution $a = b = c = 0.$)

	\subsection{Creating the search: full version}\label{sec:CushCode}

The full code can be found in the appendices. The program consists of four main parts, coordinated through a master predicate {\tt reduced\_thin\_search}:

\begin{figure}[h]\begin{lstlisting}	
reduced_thin_search( N, ThinTables ) :-
	findall( Rows, (thin_search( Vs, N, Rows), labeling( [], Vs) ), FirstTables ),
	remove_non_simple_tables( FirstTables, SimpleTables ),
	lex_reduce_tables( N, SimpleTables, ReducedTables ),
	perform_toral_switchings( N, ReducedTables, ThinTables ).
\end{lstlisting}\caption{The {\tt reduced\_thin\_search} master predicate}\end{figure}

An example query of this predicate is {\tt reduced\_thin\_search(4, ThinTables), maplist( writeln, ThinTables).} This query will search for thin tables that represent Lie algberas in dimension $15$ and print out the results.

This predicate holds true when $N$ is an integer and {\tt ThinTables} is a list of thin tables representing each simple thin Lie algebra of dimension $2^N - 1$ at least once. The first subsidiary predicate {\tt findall} does an initial pass to generate a large set of tables {\tt FirstTables}, most of which are discarded since they fail the subsequent predicates. Even so, {\tt ThinTables} may still contain pairs of tables which describe isomorphic algebras. Those are dealt with by a final post-process.

\subsubsection{  The initial thin search } 
\begin{figure}[h]\begin{lstlisting}	
thin_search( Vs, N, M, Rows) :-
	length(Rows, M),
	maplist(same_length(Rows),Rows),
	append(Rows, Vs), Vs ins 0..1,
	numlist(M, Indices),
	transpose(Rows, Rows),
	maplist( set_value_to_zero, Indices, Rows),
	stop_certain_ideals(Rows,Indices),
	act_faithfully(Rows,Indices),
	jacobi_identity_full( Indices, Rows),
	break_gl2_symmetries( Vs, Rows, N ).
\end{lstlisting}\caption{The initial {\tt thin\_search} predicate}\end{figure}
The predicate {\tt thin\_search( Vs, N, M, Rows)} is true when $M=2^N-1$ and {\tt Rows} is a $M\times M$ thin table satisfying some constraints necessary for simplicity and others which break symmetry. The variable {\tt Vs} is a flattened list of the variables in {\tt Rows} which is needed for the purpose of labelling under CLP(FD).

Specifically, the predicates {\tt stop\_certain\_ideals} and {\tt act\_faithfully} hold when the matrix {\tt Rows} satisfies the constraints derived from Lemma \ref{l0l1}; these predicates hold for thin tables describing simple Lie algebras, and do not hold for most thin tables. 

Let us explain the symmetry breaking predicate {\tt break\_gl2\_symmetries} by way of example. Consider the following two thin tables of dimension $7$. 

	\begin{center}\footnotesize
	\begin{tabular}{ c|c|c|c|c|c|c|c| } 
		& ${001}$ & ${010}$ & ${011}$ & ${100}$ & ${101}$ & ${110}$ & ${111}$ 
		\\ 
		\hline
		${001}$ & $0$ & $0$ & $0$ & $1$ & $1$ & $1$ & $1$ 
		\\ 
		\hline
		${010}$ & $0$ & $0$ & $1$ & $0$ & $0$ & $1$ & $1$ 
		\\ 
		\hline
		${011}$ & $0$ & $1$ & $0$ & $1$ & $1$ & $0$ & $0$  
		\\ 
		\hline
		${100}$ & $1$ & $0$ & $1$ & $0$ & $1$ & $1$ & $1$  
		\\ 
		\hline
		${101}$ & $1$ & $0$ & $1$ & $1$ & $0$ & $1$ & $1$  
		\\ 
		\hline
		${110}$ & $1$ & $1$ & $0$ & $1$ & $1$ & $0$ & $1$ 
		\\ 
		\hline
		${111}$ & $1$ & $1$ & $0$ & $1$ & $1$ & $1$ & $0$ 
		\\ 
		\hline
	\end{tabular}
	\hfill
	\begin{tabular}{ c|c|c|c|c|c|c|c| } 
		& ${001}$ & ${010}$ & ${011}$ & ${100}$ & ${101}$ & ${110}$ & ${111}$ 
		\\ 
		\hline
		${001}$ & $0$ & $0$ & $1$ & $0$ & $1$ & $0$ & $1$ 
		\\ 
		\hline
		${010}$ & $0$ & $0$ & $1$ & $1$ & $1$ & $1$ & $1$ 
		\\ 
		\hline
		${011}$ & $1$ & $1$ & $0$ & $1$ & $0$ & $1$ & $1$  
		\\ 
		\hline
		${100}$ & $0$ & $1$ & $1$ & $0$ & $0$ & $1$ & $1$  
		\\ 
		\hline
		${101}$ & $1$ & $1$ & $0$ & $0$ & $0$ & $1$ & $0$  
		\\ 
		\hline
		${110}$ & $0$ & $1$ & $1$ & $1$ & $1$ & $0$ & $1$ 
		\\ 
		\hline
		${111}$ & $1$ & $1$ & $1$ & $1$ & $0$ & $1$ & $0$ 
		\\ 
		\hline
	\end{tabular}
\end{center}

These two tables represent isomorphic Lie algebras; this can be seen by transposing the simple roots $001$ and $010$ and extending this permutation linearly to get another thin table (which means also tranposing $101$ and $011$). Our predicate discards a large subset of isomorphic algebras coming from such permutations of the roots. Now, any permutation of the roots preserving is determined on the simple roots, which are a standard basis of $\GF(2)^N$. Let $M$ be element of $\GL_N(2)$, i.e.~an $N\times N$ invertible matrix over $\GF(2)$. (In our example $N=3$.) We may turn $M$ into a symmetry breaking constaint by replacing the entry $T(\alpha,\beta)$ by $T(M\alpha,M\beta)$, giving a thin table $T_M$ which describes an isomorphic Lie algebra. We now impose the constraint that $T$ is lexicographically lower than $T_M$ when flattened as lists. 

For $N$ larger than $3$, it is not practical to use all elements of $\GL_N(2)$ in the initial search since the group becomes very large with $N$. Instead, we use the set of all five non-identity elements of each of the $N\choose 2$ subgroups $\GL_2(2)$ coming from each pair of simple roots. 

Querying Prolog {\tt thin\_search} with $N = 3$ results in the following $4$ tables representing the same $7$ dimensional simple thin Lie algebra which we denote by $T1, T2, T3$ and $T4.$

	\begin{center}\footnotesize
	\begin{tabular}{ c|c|c|c|c|c|c|c| } 
		& ${001}$ & ${010}$ & ${011}$ & ${100}$ & ${101}$ & ${110}$ & ${111}$ 
		\\ 
		\hline
		${001}$ & $0$ & $0$ & $0$ & $1$ & $1$ & $1$ & $1$ 
		\\ 
		\hline
		${010}$ & $0$ & $0$ & $1$ & $0$ & $0$ & $1$ & $1$ 
		\\ 
		\hline
		${011}$ & $0$ & $1$ & $0$ & $1$ & $1$ & $0$ & $0$  
		\\ 
		\hline
		${100}$ & $1$ & $0$ & $1$ & $0$ & $1$ & $1$ & $1$  
		\\ 
		\hline
		${101}$ & $1$ & $0$ & $1$ & $1$ & $0$ & $1$ & $1$  
		\\ 
		\hline
		${110}$ & $1$ & $1$ & $0$ & $1$ & $1$ & $0$ & $1$ 
		\\ 
		\hline
		${111}$ & $1$ & $1$ & $0$ & $1$ & $1$ & $1$ & $0$ 
		\\ 
		\hline
	\end{tabular}
	\begin{tabular}{ c|c|c|c|c|c|c|c| } 
		& ${001}$ & ${010}$ & ${011}$ & ${100}$ & ${101}$ & ${110}$ & ${111}$ 
		\\ 
		\hline
		${001}$ & $0$ & $0$ & $0$ & $1$ & $1$ & $1$ & $1$ 
		\\ 
		\hline
		${010}$ & $0$ & $0$ & $1$ & $0$ & $0$ & $1$ & $1$ 
		\\ 
		\hline
		${011}$ & $0$ & $1$ & $0$ & $1$ & $1$ & $1$ & $1$  
		\\ 
		\hline
		${100}$ & $1$ & $0$ & $1$ & $0$ & $1$ & $1$ & $1$  
		\\ 
		\hline
		${101}$ & $1$ & $0$ & $1$ & $1$ & $0$ & $1$ & $1$  
		\\ 
		\hline
		${110}$ & $1$ & $1$ & $1$ & $1$ & $1$ & $0$ & $0$ 
		\\ 
		\hline
		${111}$ & $1$ & $1$ & $1$ & $1$ & $1$ & $0$ & $0$ 
		\\ 
		\hline
	\end{tabular}
\end{center}
	\begin{center}\footnotesize
	\begin{tabular}{ c|c|c|c|c|c|c|c| } 
		& ${001}$ & ${010}$ & ${011}$ & ${100}$ & ${101}$ & ${110}$ & ${111}$ 
		\\ 
		\hline
		${001}$ & $0$ & $0$ & $1$ & $1$ & $1$ & $1$ & $1$ 
		\\ 
		\hline
		${010}$ & $0$ & $0$ & $1$ & $1$ & $1$ & $1$ & $1$ 
		\\ 
		\hline
		${011}$ & $1$ & $1$ & $0$ & $0$ & $1$ & $1$ & $0$  
		\\ 
		\hline
		${100}$ & $1$ & $1$ & $0$ & $0$ & $0$ & $0$ & $1$  
		\\ 
		\hline
		${101}$ & $1$ & $1$ & $1$ & $0$ & $0$ & $1$ & $1$  
		\\ 
		\hline
		${110}$ & $1$ & $1$ & $1$ & $0$ & $1$ & $0$ & $1$ 
		\\ 
		\hline
		
		${111}$ & $1$ & $1$ & $0$ & $1$ & $1$ & $1$ & $0$ 
		\\ 
		\hline
	\end{tabular}
	\begin{tabular}{ c|c|c|c|c|c|c|c| } 
		& ${001}$ & ${010}$ & ${011}$ & ${100}$ & ${101}$ & ${110}$ & ${111}$ 
		\\ 
		\hline
		${001}$ & $0$ & $0$ & $1$ & $1$ & $1$ & $1$ & $1$ 
		\\ 
		\hline
		${010}$ & $0$ & $0$ & $1$ & $1$ & $1$ & $1$ & $1$ 
		\\ 
		\hline
		${011}$ & $1$ & $1$ & $0$ & $0$ & $1$ & $1$ & $0$  
		\\ 
		\hline
		${100}$ & $1$ & $1$ & $0$ & $0$ & $1$ & $1$ & $1$  
		\\ 
		\hline
		${101}$ & $1$ & $1$ & $1$ & $1$ & $0$ & $1$ & $0$  
		\\ 
		\hline
		${110}$ & $1$ & $1$ & $1$ & $1$ & $1$ & $0$ & $0$ 
		\\ 
		\hline
		
		${111}$ & $1$ & $1$ & $0$ & $1$ & $0$ & $0$ & $0$ 
		\\ 
		\hline
	\end{tabular}
\end{center}

\subsection{Post search simplicity check}
Let $L$ be a thin Lie algebra with roots $\Lambda.$ Let $\Lambda_0\subset\Lambda$ such that for all $\alpha\in\Lambda_0,$
$$[ e_\alpha, e_{ \alpha + \beta} ] = e_{\beta} {\rm \: for \: all\: }  \beta\in \Lambda_0, \beta\neq\alpha,$$
$$[ e_\alpha, e_{ \alpha + \beta} ] = 0 {\rm \: for \: all\: }  \beta\notin \Lambda_0.$$

Then $I = \{ e_{\alpha} | \alpha\in \Lambda_0 \}$ is an ideal in $L$ and thus $L$ is not simple. 
%

The predicate {\tt remove\_non\_simple\_tables} removes all tables with such an ideal.

\subsection{Lexicographic reduction}\label{sec:lex} As mentioned above it is not plausible to implement all possible symmetry breaking constraints, especially in dimension 31. The predicate {\tt lex\_reduce\_tables} removes all tables $T$ which are permutation equivalent to a smaller table in lexicographical order. 

In the $7$ dimensional case one sees that $T1, T2$ and $T3$ are permutation equivalent, with $T1$ being the smallest in lexicographical order. This leaves just $T1$ and $T2$ to consider.
%

\subsection{Toral Switching}\label{sec:tor}  The final stage of the Prolog code performs toral switching to remove equivalent tables. 
\begin{lstlisting}	
perform_toral_switchings( N, Tables, SwitchedTables ) :-
	M is 2^N-1,
	K is 2^(N-1),
	numlist( M, Indices ),
	get_roots( N,Roots ),
	make_powers( N, Powers ), 
	maplist(locate_basis_nilps( Indices, K ), Tables, NilpIndicesList ),
	make_base_torus( BaseTorus, N, Indices ),
	findall( [A,B], ( member(A, Indices), member(B, Indices), A #< B ), Pairs ), 
	maplist( make_torus_switches( Indices, Pairs, Roots, BaseTorus ), Tables, NilpIndicesList, SwitchedTablesList ),
	once(canonical_order_tables_list( N, Roots, Powers, Tables, SwitchedTablesList, OrderedTablesList )), 
	once(make_adjacency_mat( Tables, OrderedTablesList, AdjacencyMat )),
	adjacency_mat_to_ugraph( AdjacencyMat, SwitchingGraph ),
	conn_comps( SwitchingGraph, Comps ),
	maplist( get_min_from_comp( Tables ), Comps, SwitchedTables ).
\end{lstlisting}
Let $T$ be a thin table with basis $(e_\alpha)_{\alpha\in\Phi}$ and simple roots $(\alpha_i).$ Throughout we use the torus $\mathscr{T}=\{t_i\mid 1\leq i\leq n\in \mathscr{L}\}$ where $[t_i, e_{\alpha_i}] = e_{\alpha_i} $ and $[t_i, e_{\alpha_j}] = 0$ for $i\neq j.$  The predicate {\tt make\_base\_torus} is true when {\tt Base\_Torus} is a variable representing the elements of this torus.

Suppose $\beta\in\Phi$ such that $e_\beta$ is nilpotent. Then by the theory of toral switching, the subspace $\langle t_i + (\alpha_i, \beta) e_\beta\mid 1\leq i\leq n\rangle$ is a torus of maximal rank in $\mathscr L$. Let $T(\beta)$ be the thin table obtained by decomposing $L$ with respect to this switched torus. 

Given a thin table the predicate {\tt make\_torus\_switches} holds whenever the variable $\tt SwitchedTablesList$ contains all such toral switches from the base torus.

For example, the dimension $7$ table $T1$ has nilpotent elements $e_{010}$ and $e_{011}.$ Toral switching with respect to these elements produces the following two tables.

	\begin{center}\footnotesize
	\begin{tabular}{ c|c|c|c|c|c|c|c| } 
		& ${001}$ & ${010}$ & ${011}$ & ${100}$ & ${101}$ & ${110}$ & ${111}$ 
		\\ 
		\hline
		${001}$ & $0$ & $0$ & $1$ & $1$ & $1$ & $1$ & $1$ 
		\\ 
		\hline
		${010}$ & $0$ & $0$ & $1$ & $0$ & $0$ & $1$ & $1$ 
		\\ 
		\hline
		${011}$ & $1$ & $1$ & $0$ & $1$ & $1$ & $1$ & $0$  
		\\ 
		\hline
		${100}$ & $1$ & $0$ & $1$ & $0$ & $1$ & $0$ & $1$  
		\\ 
		\hline
		${101}$ & $1$ & $0$ & $1$ & $1$ & $0$ & $1$ & $1$  
		\\ 
		\hline
		${110}$ & $1$ & $1$ & $1$ & $0$ & $1$ & $0$ & $1$ 
		\\ 
		\hline
		${111}$ & $1$ & $1$ & $0$ & $1$ & $1$ & $1$ & $0$ 
		\\ 
		\hline
	\end{tabular}
	\begin{tabular}{ c|c|c|c|c|c|c|c| } 
		& ${001}$ & ${010}$ & ${011}$ & ${100}$ & ${101}$ & ${110}$ & ${111}$ 
		\\ 
		\hline
		${001}$ & $0$ & $1$ & $1$ & $1$ & $0$ & $1$ & $1$ 
		\\ 
		\hline
		${010}$ & $1$ & $0$ & $1$ & $0$ & $1$ & $1$ & $1$ 
		\\ 
		\hline
		${011}$ & $1$ & $1$ & $0$ & $1$ & $1$ & $0$ & $1$  
		\\ 
		\hline
		${100}$ & $1$ & $0$ & $1$ & $0$ & $1$ & $0$ & $1$  
		\\ 
		\hline
		${101}$ & $0$ & $1$ & $1$ & $1$ & $0$ & $1$ & $1$  
		\\ 
		\hline
		${110}$ & $1$ & $1$ & $0$ & $0$ & $1$ & $0$ & $0$ 
		\\ 
		\hline
		${111}$ & $1$ & $1$ & $1$ & $1$ & $1$ & $0$ & $0$ 
		\\ 
		\hline
	\end{tabular}
\end{center}
	
For each switched table $T$ generated, the predicate {\tt canonical\_order\_tables\_list} calculates the lexicographically smallest under permutation equivalence, say $LM(T).$ Every table generated this way must be an element of {\tt ReducedTables}.

Finally we construct a graph with vertex set represented by the tables {\tt ReducedTables} in the search. And edge is drawn between tables $T_1$ and $T_2$ if there exists a switching of $T_1$ to a new table $T_1'$ such that $LM(T_1')=LM(T_2)$.  For each connected component in this graph we take the table which is the smallest with respect to lexicographical order. That gives our final list of tables denoted by {\tt ThinTables}.

In the dimension $7$ search we see that $T1(010)$ is equivalent to $T2$. Thus in dimension $7$ the search produces precisely one thin table.
%
\subsection{Some final post-processing}\label{sec:finalfinal}
The result of the full search initiated by the predicate {\tt reduced\_thin\_search} is a list of $9$ or $10$ tables in dimensions $15$ or $31$ respectively.

In case $\dim L=15$, each pair of Lie algebras arising from each table is seen to be non-isomorphic by comparison of their $2$-nilpotent and toral elements, with two exceptions. In the exceptional cases each pair of Lie algebras was found to be isomorphic.

In case $\dim L=31$, each pair of Lie algebras is seen to be non-isomorphic by comparison of their $2$-nilpotent and toral elements, again with two exceptions. In one exceptional case (I), we found an invariant distinguishing the algebras. In case (II) the algebras turned out to be isomorphic.

Since the calculations needed to establish the above are rather technical and do not appear to have wider applicability, we describe them rather tersely below.

It is typically not a straightforward task to find an isomorphism between two Lie algebras by brute force. The total number of linear maps between two $31$-dimensional Lie algebras over $\GF(2)$ is $2^{31^2}$ and it is not obvious how to search quickly for one of the very few which turns out to respect the Lie bracket. (The general method given in \cite{Eic10}, for example, cannot be used, as it requires iterating over all elements of $L$.) However, we were fortunate to notice that in each of the cases we considered, there was a unique non-zero $2$-nipotent element of minimal rank. In case $\dim L=15$, this fact was enough to run a constraint-based search in Prolog to locate an isomorphism.

In case $\dim L=31$ we needed to do rather more work. To rule out an isomorphism between the Lie algebras $P_1$ and $P_2$ in case (I), we took the respective sets of $2$-nilpotent elements $N_1$ and $N_2$, say,  and found in each a unique non-zero $2$-nilpotent of minimal rank, $n_1$ and $n_2$, say. Then we calculated the invariant $|(\ad(n_i)\cdot N_i) \cap N_i|$ for each algebra. It turned out that there were 2048 elements $n\in N_1$ such that $\ad(n_1) n\not\in N_1$ but 4096 elements $n\in N_2$ such that $\ad(n_2) n\not\in N_2$. Therefore $N_1\not\cong N_2$.

To find an isomorphism between the two remaining algebras, say $\TW$ and $\WT$, we aimed to find bases of each algebra whose associated structure constant tables matched. To that end, we had Prolog output a list of the toral elements of each algebra and used GAP to find the subset of the $2$-nilpotent elements which constituted the so-called \emph{sandwich elements}. The latter elements are those $2$-nilpotent elements $n$ satisfying $[[n,x],[n,y]]=0$ for all $x,y\in L$. One checks that the condition for a $2$-nilpotent element $n$ to be a sandwich is equivalent to saying that the linear map \[(1+\ad n): L\to L\] induces an automorphism of $L$. Exponentiating the sandwich elements in this way generated a group of automorphisms which we used to partition the toral elements into orbits. Looking at those orbits which lay outside the centralisers of all nilpotent elements reduced the set of orbits significantly. Some experimentation established that one could generate the whole of $\TW$ and $\WT$ using just one of these orbits $\O$ together with a single element $x$ from another orbit $\O'$. In fact, the set of pairs $(\O,x)$ such that $\langle \O,x\rangle=L$ could be refined further by imposing the condition that the set $\{t\in \O \mid [t,x]^{[2]}=0\}$ had just $2$ elements, $t_1$ and $t_2$ say, necessarily linearly independent. We also observed that $\dim k\O=6$. All bases of $k\O$ which contained $t_1$ and $t_2$ were collected. Multiplying together all pairs of elements in each of these bases and taking their ranks gave a list of signatures. Some signatures occurred rather rarely. We used one basis with a rare signature together with the extra element $x$ to generate a basis in  $\TW$, keeping track of the $31$ words $\W$ in the generators used to produce each basis element. We used this basis to get a structure constant table for $\TW$. Turning to $\WT$, we considered the triples $(\O,x,B)$ where $t_1,t_2\in B\subseteq \O$ and $B$ is a basis of $k\O$ generating $L$ with $x$ and having rare signature. Taking each ordering of $B$ in turn, we calculated appropriate words in $B$ and $x$ corresponding to those in $\W$. For the collection of words that gave a basis, we calculated a structure constant table for $\WT$. Eventually we hit upon a combination which gave the same structure constant table that we had calculated for $\TW$ and so concluded they were isomorphic.

\subsection{ Running the search for dimensions 7, 15, 31 }
The following list shows the number of thin tables satisfying successive predicates in dimensions $7, 15$ and $31$. The table {\tt FinalTables} denotes the exact number of isomorphism classes of thin Lie algebras after the final post-processing above.
	\begin{center}
	\begin{tabular}{ c|c|c|c| } 
		& $7$ & $15$ & $31$ 
		\\ 
		\hline
		{\tt FirstTables} & $4$ & $127$ & $5532$ 
		\\ 
		\hline
		{\tt SimpleTables} & $4$ & $99$ & $5532$ 
		\\ 
		\hline
		{\tt ReducedTables} & $2$ & $22$ & $72$ 
		\\ 
		\hline
		{\tt ThinTables} & $1$ & $9$ & $10$ 
		\\ 
		\hline
		{\tt FinalTables} & $1$ & $7$ & $9$ 
		\\ 
		\hline
	\end{tabular}
\end{center}
Text files containing the contents of {\tt FinalTables}  in dimensions $3, 7, 15$ and $31$ can be found at \url{github.com/cushydom88/simple-lie-algebras} .

\section{The results}

	\begin{thm} There are six isomorphism classes of simple thin Lie algebras of dimension $15$, namely ${\rm Kap}_2 (4), P(2;\underline{2}), P(3;[2,1,1]), {\rm Skry}, {\rm Eick}_{15,1}$, ${\rm Eick}_{15,2}$ and a previously unknown algebra we denote $N_1(4)$.
		
		There are nine isomorphism classes of simple thin Lie algebras of dimension $31$; these are labelled $P(3;[2,2,1])$, $P(4;[2,1,1,1])$ together with seven previously unknown algebras we call $\mathrm{LD,\ TW,\ LS,\ BW,\ GN,\ DL\text{ and }CB}$.\end{thm}

	\begin{rmk}
		(i) The algebra $N_1(4)$ has been discovered independently, in \cite{Em22}.
		
		(ii) The Lie algebras with labels $P(3;[2,2,1])$ and $P(4;[2,1,1,1])$ are previously known, and simple of dimension $31$. We have checked that they are thin. By contrast, the other known algebras of dimension $31$ are not thin, namely $Q(5, \underline{1}), W(1,\underline{5})', P(2;[4,1]), P(2;[3,2])$ and $P(3;[3,1,1]).$ We checked this in GAP using Lemma \ref{thinvsthin}. 

		(iii) If only the zero diagonal and symmetry constraints are used there are $2^{465}\approx 9.53\times 10^{139}$ possible tables in which dimension $31$. It is reasonable to say that searching for tables generating simple Lie algebras would have been hopeless using standard computational methods.
	\end{rmk}

	\section{New infinite families of simple Lie algebras}\label{sec:new}
	We end by introducing two new infinite families of simple (thin) Lie algebras. These were discovered by hand, extrapolating from the $15$-dimensional algebras $N_1(4)$ and $\mathrm{Eick}_{15,2}$ and some $63$-dimensional analogues found by Prolog. We will define these algebras over an arbitrary field $k$ of characteristic $2$.
	
	\subsection{Definitions}
	
	Fix $n\in\mathbb{N}.$ For $i\in {1,\ldots, n}$ we let $\alpha_{i}\in \GF(2)^n$ be the standard basis. Then according the usual inner product, $(\alpha,\alpha_i)$ is the coefficient of $\alpha$ according to that basis. Thus $(\alpha,\beta)=\alpha\cdot\beta=\sum_{i}[(\alpha,\alpha_i)(\beta,\alpha_i)]$.  In this case, the $\ell^{1}$ norm $|\alpha|$ equals the Euclidean norm $||\alpha||=(\alpha,\alpha)^{1/2}$ and both identify with the parity of the sum of the coefficients of $\alpha$ according to the $\alpha_i$. Namely,
	$$\left|\left( \sum_{i=1}^{n}x_i \alpha_i \right)\right| = \sum_{i=1}^{n}x_i \in \GF(2). $$
	
	Let $\Phi:=\GF(2)^{n} \setminus \underline{0}$ and let $N_{1}(n, k)$ and $N_2(n,k)$ be $(2^n-1)$-dimensional $k$-vector spaces on the basis $\left\{ e_{\alpha} : \alpha\in \Phi\right\}$. The Lie bracket is determined by $[e_\alpha,e_\beta]$ for $\alpha\neq\beta$ as follows:
	
	\underline{For $N_1(n,k)$:}
	\begin{enumerate}
		\item 
		Suppose such that $|\alpha| = |\beta| = 1$. Then we set
		$$[e_{\alpha}, e_{\beta}] = e_{\alpha + \beta}.$$
		\item
		Suppose that $|\alpha|=0$ or $|\beta|=0$. Then we set
		
		$$
		[e_{\alpha}, e_{\beta}] =\begin{cases}
		e_{\alpha + \beta}, & {\rm if }\: (\alpha, \beta) = 1,\\
		0, & \text{otherwise.}
		\end{cases}
		$$
	\end{enumerate}
		
		\underline{For $N_2(n,k)$:}
	\begin{enumerate}
		\item 
		Suppose that $(\alpha,\alpha_n) = 0$ and $|\alpha| = |\beta| =1$. Then we set
		$$[e_{\alpha}, e_{\beta}] = e_{\alpha + \beta}.$$
		\item
		Suppose that $(\alpha,\alpha_n) = 0$ and $|\alpha| = 0$. Then we set
		$$
		[e_{\alpha}, e_{\beta}] =\begin{cases}
		e_{\alpha + \beta}, & {\rm if }\: (\alpha,\beta) = 1,\\
		0, & \text{otherwise}.
		\end{cases}
		$$
		\item 
		Suppose that $(\alpha,\alpha_n) = (\beta,\alpha_n) = 1$. Then we set
		$$
		[e_{\alpha}, e_{\beta}] =\begin{cases}
		0, & {\rm if }\: |\alpha| =  |\beta| = 1, \\
		e_{\alpha + \beta}, & \text{otherwise}.
		\end{cases}
		$$
	\end{enumerate}
		Note that $N_1(n-1,k)$ is a subalgebra of $N_2(n,k)$ via the subset $\{\alpha\in\Phi\mid (\alpha,\alpha_n)=0\}$.

		It is simple, though tedious, to check that $[\underline{\hspace{.25cm}},\underline{\hspace{.25cm}}]$ is a valid Lie bracket according to the various cases. We do this explicitly for one case for $N_1(n,k)$ and omit the remainder for brevity.

	Suppose that $\alpha, \beta, \gamma \in \GF(2)^{n} \setminus \underline{0} $ with $|\alpha| = |\beta| = |\gamma| =1$. 
	Then
	$$[e_{\alpha}, [e_{\beta}, e_{\gamma}] ] = [e_{\alpha}, e_{\beta+\gamma}] = (\alpha, (\beta +\gamma )) e_{\alpha + \beta + \gamma} = ((\alpha,\beta)+(\alpha,\gamma)) e_{\alpha + \beta + \gamma}.$$
	Similarly
	$$[e_{\beta}, [e_{\gamma}, e_{\alpha}]] = ((\alpha,\beta)+(\beta,\gamma)) e_{\alpha + \beta + \gamma}\text{, and}$$
	$$[e_{\gamma}, [e_{\beta}, e_{\alpha}]] = ((\alpha,\gamma)+(\beta,\gamma))e_{\alpha + \beta + \gamma} .$$
	Thus
	$$[e_{\alpha}, [e_{\beta}, e_{\gamma}] ] + [e_{\beta}, [e_{\gamma}, e_{\alpha}]] + [e_{\gamma}, [e_{\beta}, e_{\alpha}]] = (2(\alpha,\beta) +2(\beta,\gamma) + 2(\alpha,\gamma))e_{\alpha + \beta + \gamma} = 0,$$
	as required.

	We write $N_1(n)$ and $N_2(n)$ for $N_1(n,\GF(2))$ and $N_2(n,\GF(2))$ respectively.
	
	\subsection{The simplicity of the new infinite families}
	In everything that follows,  $L$ will be one of the thin algebras $N_1(n,k)$ or $N_2(n,k)$ with $k$-basis $\{e_\alpha:\alpha\in\Phi\}$ and Lie bracket as defined in the last section.
	
	Direct calculation from the definitions above show $\ad(e_{\alpha_i})^2$ is a toral element $t_i\in\Der(L)$ which centralises $e_{\alpha_i}$ but has weight $1$ on each $e_{\alpha_j}$ with $i\neq j\in\{1,\dots, n\}$; i.e. $t_i(e_{\alpha_j})=\delta_{ij}e_{\alpha_j}$ for $\delta_{ij}$ the Kronecker delta. 
	
	First assume $n=2m$ is even. Then the element $\hat t_i:=\sum_{j\neq i} t_i$ thus has weight $1$ on $e_{\alpha_i}$ and centralises $e_{\alpha_j}$ for $j\neq i$.  Therefore $\{\hat t_i\mid 1\leq i\leq n\}$ is a basis for a torus $T\subseteq\mathscr{L}$. We now show:
		
	\begin{lemma}\label{lemma:torus} For $n=2m$, the algebra $L$ has a thin decomposition for which $\hat t_i(e_\alpha)=(\alpha,\alpha_i)e_\alpha$.\end{lemma}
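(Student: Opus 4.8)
The plan is to verify directly that the $\hat t_i$ span an $n$-dimensional torus inside $\mathscr{L}$ and that $L$ decomposes into one-dimensional root spaces under their joint action with root system $\Phi = \GF(2)^n \setminus \{\underline 0\}$. The structural fact that $L$ is thin has, in effect, already been recorded — $L = N_i(n,k)$ comes with the spanning set $\{e_\alpha\}$ satisfying the dichotomy (\ref{thineqn}) — so by Lemma \ref{thinvsthin} it suffices to exhibit the correct torus in $\Der(L)$. The claim is that this torus is the span of the $\hat t_i$, acting via $\hat t_i(e_\alpha) = (\alpha,\alpha_i)e_\alpha$.

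First I would confirm the preliminary assertion made just before the lemma statement, namely that $\ad(e_{\alpha_i})^2$ equals a derivation $t_i$ with $t_i(e_{\alpha_j}) = \delta_{ij}e_{\alpha_j}$: since $\ad(e_{\alpha_i})$ acts on each root space $\langle e_\beta\rangle$ (for $\beta$ ranging suitably) by sending it into $\langle e_{\alpha_i+\beta}\rangle$ and back, the square acts diagonally, with eigenvalue $1$ exactly on those $e_\beta$ for which $\langle e_{\alpha_i},e_\beta,e_{\alpha_i+\beta}\rangle\cong\fsl_2$, and one reads off from the bracket definitions of $N_1(n,k)$ and $N_2(n,k)$ that among the $e_{\alpha_j}$ this happens precisely when $j\ne i$. (This is the only place one must open the case-by-case definitions, and it is the "direct calculation" already asserted in the text.) Being a sum of squares of inner derivations, each $t_i$ lies in $\mathscr L$ and is toral, and the $t_i$ pairwise commute because each acts diagonally on the common eigenbasis $\{e_\alpha\}$; hence $\hat t_i := \sum_{j\ne i} t_j$ also lies in $\mathscr L$, is toral, and the family $\{\hat t_i\}$ commutes.

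Next I would compute the joint action: for any $\alpha\in\Phi$ and any $i$, since each $e_{\alpha_j}^{\,[2]}$ acts on $e_\alpha$ by a scalar depending on whether $\langle e_{\alpha_j},e_\alpha\rangle$ generates an $\fsl_2$, one needs that $\sum_{j\ne i}(\text{scalar of }t_j\text{ on }e_\alpha) = (\alpha,\alpha_i)$ in $\GF(2)$. Equivalently $t_j(e_\alpha) = c_j(\alpha)e_\alpha$ with $\sum_{j\ne i} c_j(\alpha) = (\alpha,\alpha_i)$, so $\sum_{j} c_j(\alpha) =$ (something independent of $i$) and $c_i(\alpha) = \sum_{j}c_j(\alpha) - (\alpha,\alpha_i)$; checking $c_i(\alpha_j)=\delta_{ij}$ from the previous paragraph pins down $c_i$ on the $\alpha_j$, and one extends using that $t_i = \ad(e_{\alpha_i})^2$ is a derivation together with the bracket relations that let one write a general $e_\alpha$ in terms of brackets of the $e_{\alpha_j}$ (here tautness / the $|\alpha|=1$ case of the definition is what gives enough brackets). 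Granting $\hat t_i(e_\alpha) = (\alpha,\alpha_i)e_\alpha$, linear independence of the $\hat t_i$ is immediate (evaluate on $e_{\alpha_1},\dots,e_{\alpha_n}$), so $T := \langle \hat t_1,\dots,\hat t_n\rangle$ is an $n$-dimensional torus; the weight of $T$ on $e_\alpha$ is $\alpha\in\GF(2)^n$ read through the chosen coordinates, the weight is $\underline 0$ only on the (empty) zero weight space, and every nonzero $\alpha$ occurs exactly once. Thus $L$ decomposes for $T$ with root system $\Phi = \GF(2)^n\setminus\{\underline 0\}$, which is exactly a thin decomposition, and Lemma \ref{thinvsthin} (or the definition directly) finishes it.

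The main obstacle is the bookkeeping in the middle step: one must be sure that $\ad(e_{\alpha_i})^2$ really is the \emph{inner} derivation computing to the diagonal operator claimed, i.e. that no off-diagonal contributions survive in characteristic $2$, and this requires knowing for each $\alpha$ exactly when $[e_{\alpha_i},[e_{\alpha_i},e_\alpha]] = e_\alpha$ versus $0$ — a finite but genuinely case-dependent check against the three (resp.\ two) clauses defining $N_2(n,k)$ (resp.\ $N_1(n,k)$), keeping careful track of the parities $|\alpha|$ and the value $(\alpha,\alpha_n)$. Everything after that is linear algebra over $\GF(2)$.
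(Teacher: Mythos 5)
Your overall strategy is the same as the paper's: exhibit the torus spanned by the elements $\hat t_i=\sum_{j\neq i}\ad(e_{\alpha_j})^2\in\mathscr{L}$ and compute its weights on the basis $\{e_\alpha\}$. But the central step --- the actual value of the scalar $c_j(\alpha)$ by which $t_j=\ad(e_{\alpha_j})^2$ acts on $e_\alpha$ --- is exactly what you leave undone, and your substitute for it does not work as described. First, the one concrete value you do commit to is wrong: from the defining clauses of $N_1(n,k)$ one gets $[e_{\alpha_i},[e_{\alpha_i},e_{\alpha_j}]]=e_{\alpha_j}$ for all $j\neq i$ and $t_i(e_{\alpha_i})=0$, so $c_i(\alpha_j)=1-\delta_{ij}$, not $\delta_{ij}$ (you copy the formula $t_i(e_{\alpha_j})=\delta_{ij}e_{\alpha_j}$ from the text, which is evidently a slip there --- it is $\hat t_i$, not $t_i$, that has weight $\delta_{ij}$ on $e_{\alpha_j}$, and only because $n$ is even); indeed your own preceding sentence (``eigenvalue $1$ exactly when $j\neq i$'') contradicts it. Second, your proposed extension from the basis vectors to general $e_\alpha$ --- ``use that $t_i$ is a derivation together with bracket relations writing $e_\alpha$ in terms of brackets of the $e_{\alpha_j}$'' --- is not available for free: additivity of $c_i$ along a bracket only applies when that bracket is nonzero, and producing, for every $\alpha$, an explicit chain of nonzero brackets (e.g.\ $e_{\alpha_1+\alpha_2+\alpha_3}=[e_{\alpha_1+\alpha_2+\alpha_4},e_{\alpha_3+\alpha_4}]$, since $[e_{\alpha_1},e_{\alpha_2+\alpha_3}]=0$) already requires the same case analysis against the two (resp.\ three) clauses that you are trying to avoid, now entangled with an unstated induction.

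The paper instead just computes $c_i(\alpha)$ directly from the definitions in the two parity cases: for $|\alpha|=1$ one finds $t_i(e_\alpha)=e_\alpha$ precisely when $(\alpha,\alpha_i)=0$, i.e.\ $c_i(\alpha)=1+(\alpha,\alpha_i)$, while for $|\alpha|=0$ one finds $c_i(\alpha)=(\alpha,\alpha_i)$; summing over $j\neq i$ gives $(\alpha,\alpha_i)$ in both cases, but in the $|\alpha|=1$ case only because the $n-1$ constant terms sum to $1$ --- this is the one and only place the hypothesis $n=2m$ enters, and your write-up never locates it. Until you carry out that short case computation (for $N_2$ as well, tracking $(\alpha,\alpha_n)$), the identity $\hat t_i(e_\alpha)=(\alpha,\alpha_i)e_\alpha$, and hence the thin decomposition, has not been established.
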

\begin{proof}
	We calculate the weights of $T$ on $L$. If $|\alpha|=1$ then from the definitions, \[t_i(e_\alpha)=\begin{cases}e_\alpha\text{ whenever }(\alpha,\alpha_i)=0,\\0\text{ otherwise.}\end{cases}\]
	and so
\[\hat t_i(e_\alpha)=\begin{cases}e_\alpha\text{ whenever }(\alpha,\alpha_i)=1,\\0\text{ otherwise.}\end{cases}\]
It follows that $\alpha$ is a root for $T$ such that $\alpha(\hat t_i)=\alpha_i$. A similar analysis gives the same conclusion in case $|\alpha|=0$. Thus $T$ acts on $L$ with one-dimensional root spaces, with $\hat t_i$ the coroots of $\alpha_i$.
	\end{proof}
	\begin{thm}\label{thm:simples} For $n=2m$, the algebra $L$ is absolutely simple.
	\end{thm}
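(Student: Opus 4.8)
The plan is to prove that $L$ has no proper nonzero ideal, and then to observe that the argument survives every field extension. First, I would reduce to a combinatorial statement. By Lemma~\ref{lemma:torus}, $L$ has a thin decomposition for the torus $T=\langle\hat t_1,\dots,\hat t_n\rangle\subseteq\Der(L)$, so $T$ acts on $L$ semisimply with one-dimensional root spaces $ke_\alpha$ indexed by $\alpha\in\Phi$ and with no zero weight space. Let $I\triangleleft L$ be a nonzero ideal. Then $I$ is stable under each $\ad(e_{\alpha_i})$, hence under each toral derivation $t_i=\ad(e_{\alpha_i})^2$, hence under $T$ (recall $\hat t_i=\sum_{j\ne i}t_j$). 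A $T$-stable subspace of a $T$-semisimple module is a sum of weight spaces, so $I=\bigoplus_{\alpha\in S}ke_\alpha$ for some nonempty $S\subseteq\Phi$. Moreover, since $I$ is an ideal and $L$ is thin, $S$ has the closure property that $\alpha\in S$ together with $[e_\alpha,e_\beta]\ne0$ (for any $\beta\in\Phi$) forces $\alpha+\beta\in S$. It therefore suffices to prove the reachability statement: from any single root, repeatedly passing from $\gamma$ to $\gamma+\beta$ across nonzero brackets, one obtains all of $\Phi$; then $S=\Phi$ and $I=L$.

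For $L=N_1(n,k)$ I would establish reachability in three moves, using only the defining bracket rules. (i)~From any root one reaches an odd-norm root: if $|\alpha|=0$, pick $i$ with $(\alpha,\alpha_i)=1$, so that $[e_\alpha,e_{\alpha_i}]=e_{\alpha+\alpha_i}$ with $|\alpha+\alpha_i|=1$. (ii)~From one odd-norm root $\alpha_0$ one reaches every even-norm root: $[e_{\alpha_0},e_{\delta}]=e_{\alpha_0+\delta}$ for every odd-norm $\delta\ne\alpha_0$, and $\{\alpha_0+\delta\}$ runs over all even-norm roots. (iii)~From $\alpha_0$ together with all even-norm roots one reaches every odd-norm root: the available moves send an odd-norm $\delta$ to $\delta+\gamma$ for even-norm $\gamma$ with $(\delta,\gamma)=1$, equivalently $\delta\mapsto\delta'$ whenever $\delta,\delta'$ are odd-norm and orthogonal; this graph on the odd-norm roots has diameter at most $2$, because if $(\delta,\delta')=1$ then $\langle\delta,\delta'\rangle^{\perp}$ is not contained in the even-norm hyperplane $\langle\underline 1\rangle^{\perp}$, where $\underline 1=\alpha_1+\dots+\alpha_n$ (otherwise $\underline 1\in\langle\delta,\delta'\rangle$, and as $\underline 1$ has even norm when $n$ is even this would force $\underline 1=\delta+\delta'$, whence $(\delta,\delta')=(\delta,\underline 1)+(\delta,\delta)=|\delta|+|\delta|=0$, contradicting $(\delta,\delta')=1$), so $\langle\delta,\delta'\rangle^{\perp}$ contains an odd-norm $\mu$ and $\delta$–$\mu$–$\delta'$ is a path. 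Moves (i)–(iii) give $S=\Phi$, so $N_1(n,k)$ is simple. For $L=N_2(n,k)$ the same three-move scheme applies after partitioning $\Phi$ into $H=\{(\alpha,\alpha_n)=0\}$, on which $L$ is the subalgebra $N_1(n-1,k)$, and its complement $H'$; the extra point is that rule~(3) on $H'$ supplies precisely the roots of $H$ — notably $\alpha_1+\dots+\alpha_{n-1}$ — that $N_1(n-1,k)$ fails to produce internally, and this again works exactly because $n$ is even.

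Finally, absolute simplicity is then immediate: every structure constant of $N_i(n,-)$ lies in $\GF(2)$, so $N_i(n,k)\otimes_k K\cong N_i(n,K)$ for any field extension $K/k$, and nothing in the argument above uses anything about $k$ beyond $\Char k=2$ — toral elements are semisimple over every field, and the reachability statement concerns only the finite index set $\GF(2)^n\setminus\{\underline 0\}$. Hence $N_i(n,K)$ is simple for every field $K$ of characteristic $2$, in particular for $K=\overline k$, so $L$ is absolutely simple. I expect the genuine work to lie in the reachability lemma, and above all in its $N_2$ form: the interaction of the three-part bracket rule with the $H$/$H'$ split forces a somewhat lengthy (though elementary) case analysis, and it is precisely there that the hypothesis $n=2m$ is indispensable — for $n$ odd these algebras are merely perfect of corank $1$ rather than simple.
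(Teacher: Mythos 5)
Your reduction and your treatment of $N_1(n,k)$ are correct, and the overall strategy is the same as the paper's: the paper likewise uses the toral derivations $t_i=\ad(e_{\alpha_i})^2$ and $\hat t_i$ (packaged as the projection $t_\beta$) to show that a nonzero ideal contains a root vector $e_\beta$, and then reaches every other root vector by explicit brackets. Your weight-space formulation of the first step is equivalent, and your three-move reachability for $N_1(n,k)$ --- pass to an odd-norm root, sweep out all even-norm roots, then all odd-norm roots via the orthogonality graph of diameter at most $2$, where $n=2m$ enters through $|\alpha_1+\cdots+\alpha_n|=0$ --- is complete and, if anything, spelled out more carefully than the paper's brief bracket computation.

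The gap is $N_2(n,k)$: the theorem is stated for $L$ equal to either algebra, and for $N_2$ you only assert that ``the same three-move scheme applies'' after a case analysis you explicitly decline to carry out. It does not transfer verbatim. By rule (3), two odd-norm roots lying in $H'=\{\alpha\in\Phi\mid(\alpha,\alpha_n)=1\}$ bracket to zero, so your move (ii) works only if the odd-norm pivot lies in $H=\{\alpha\mid(\alpha,\alpha_n)=0\}$, and move (i) must therefore be strengthened to land in $H$, which requires using rule (3) (note it carries no inner-product condition) to exit $H'$; likewise the adjacency criterion underlying move (iii) changes on $H'$, so the graph whose diameter you bound is a different graph there. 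Finally, the root $\alpha_1+\cdots+\alpha_{n-1}$, which cannot be reached inside the subalgebra $N_1(n-1,k)$ because $n-1$ is odd, needs an explicit bracket from $H'$ that you name but never exhibit (for instance via rule (3) applied to a suitable pair of $H'$-roots of opposite parities). Until this case analysis is written down --- the paper handles both algebras in one argument by first producing some $e_{\alpha_j}\in I$ and then generating the remaining root vectors from it --- your proposal establishes the theorem only for $N_1(n,k)$, i.e.\ half of the statement.
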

	\begin{proof}Our proof will show the simplicity of $L$ with $k$ arbitrary of characteristic $2$. 
	
	Suppose $0\neq x\in I\triangleleft L$ is an ideal. Write $x=\sum_{\alpha\in\Phi}x_{\alpha}e_\alpha$ and pick $\beta$ such that $x_\beta = 1.$ Since $t_i=\ad(e_i)^2$, we have $t_i$ and $\hat t_i$ stabilise $I$ for each $1\leq i\leq n$. Now write $\beta=\sum \beta_i\alpha_i$ for $\beta_i=(\beta,\alpha_i)$. The proof of Lemma \ref{lemma:torus} in fact shows that the derivation $t_\beta=\prod_{\beta_i=1} (\hat t_i)\circ \prod_{\beta_i=0}(t_i)$ satisfies $t_\beta(x)=x_\beta e_\beta\in I$.
	
Pick $j$ such that $\beta_j = 1.$ Then either $\beta=\alpha_j$ or $\beta+\alpha_j\in\Phi$. In the latter case, $|\beta|+|\beta+\alpha_j|=1\in\GF(2)$ for any $\beta\in\Phi$ and so it follows that $[ e_\beta, e_{\beta + \alpha_j} ] = e_{\alpha_j}\in I.$ Similarly, if $\beta\neq \alpha_j$ is a root for which $\beta_j=1$ then $[ e_{\beta+\alpha_j},e_{\alpha_j}]=e_\beta$, hence $I=L$.\end{proof}
	


	\begin{thm}
	For $n=2m+1$ with $m\geq 2$, then $\DD(L)=[L,L]$ is absolutely simple of dimension $2^n-2$. 
	\end{thm}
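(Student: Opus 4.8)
The plan is to follow the template of Theorem \ref{thm:simples}, after first pinning down $\DD(L)=[L,L]$. For $L=N_1(n,k)$ with $n$ odd I would show $[L,L]$ is exactly the span of $\{e_\alpha : \alpha\in\Phi,\ \alpha\neq\mathbf{1}\}$, where $\mathbf{1}=(1,\dots,1)$, so that $\dim[L,L]=2^n-2$. Every bracket equals $0$ or $e_{\alpha+\beta}$, and for each $\gamma\neq\mathbf{1}$ one exhibits $\alpha,\beta$ with $[e_\alpha,e_\beta]=e_\gamma$: if $|\gamma|=0$ take $\alpha=\alpha_i$, $\beta=\gamma+\alpha_i$ for $i\in\operatorname{supp}\gamma$; if $|\gamma|=1$ and $\gamma\neq\mathbf{1}$ take $\alpha=\alpha_i+\alpha_j$ with $\gamma_i=1$, $\gamma_j=0$ and $\beta=\gamma+\alpha$. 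Conversely $e_{\mathbf{1}}$ never occurs, since $(\alpha,\mathbf{1})=|\alpha|$ forces every clause of the $N_1$ bracket that would output $e_{\alpha+\beta}$ with $\alpha+\beta=\mathbf{1}$ to be self-contradictory. The case $L=N_2(n,k)$ is handled the same way once the three defining clauses are sorted by the parities $|\alpha|,|\beta|$ and the values $(\alpha,\alpha_n),(\beta,\alpha_n)$; there is again a unique root $\omega$ with $e_\omega\notin[L,L]$.

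Put $M=[L,L]$, a characteristic ideal of $L$; it is stable under the toral derivations $t_i:=\ad(e_{\alpha_i})^2$, and $e_{\alpha_i}\in M$ because $n\geq 5$. A direct computation gives $t_i(e_\gamma)=\gamma_i e_\gamma$ when $|\gamma|=0$ and $t_i(e_\gamma)=(1+\gamma_i)e_\gamma$ when $|\gamma|=1$, so the $T'$-weight of $e_\gamma$ (for $T':=\langle t_1,\dots,t_n\rangle$) is $\gamma$ when $|\gamma|=0$ and $\mathbf{1}+\gamma$ when $|\gamma|=1$. Because $n$ is odd this weight is always a nonzero even vector, and each even $\mu\neq 0$ is attained by precisely the two root vectors $e_\mu$ and $e_{\mathbf{1}+\mu}$; hence $M$ decomposes under $T'$ into $2^{n-1}-1$ two-dimensional weight spaces $V_\mu=\langle e_\mu,e_{\mathbf{1}+\mu}\rangle$ (in contrast with the even case, where the analogous torus separated all root vectors and gave a thin decomposition of $L$ itself). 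This degeneracy is the structural reason $M$, not $L$, is the simple object.

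Now let $0\neq I\triangleleft M$. As $I$ is $T'$-stable it meets some $V_\mu$, so contains $x=ae_\eta+be_{\mathbf{1}+\eta}$ with $(a,b)\neq(0,0)$ and, relabelling, $|\eta|=1$. If $a=0$ or $b=0$ we already have a root vector in $I$; otherwise I separate the two summands with an inner derivation. If $\eta$ has integer weight $\geq 3$, pick $i\in\operatorname{supp}\eta$ with $\alpha_i\neq\eta$ and note $\ad(e_{\alpha_i})e_\eta=e_{\alpha_i+\eta}\neq 0$ while $\ad(e_{\alpha_i})e_{\mathbf{1}+\eta}=0$ (as $(\alpha_i,\mathbf{1}+\eta)=1+\eta_i=0$), so $\ad(e_{\alpha_i})x=ae_{\alpha_i+\eta}\in I$. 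The remaining case $\eta=\alpha_j$ is handled by a double bracket: applying $\ad(e_{\alpha_k})$ with $k\neq j$ and then $\ad(e_{\alpha_l})$ with $l\notin\{j,k\}$ sends $x$ to a nonzero multiple of a single root vector, using $n\geq 5$ to keep the target nonzero. Either way $I$ contains $e_\nu$ for some $\nu\neq\mathbf{1}$. Finally I show one root vector generates $M$: from an odd $e_\nu\in I$, clause~(1) gives $[e_\nu,e_{\nu+\mu}]=e_\mu$ for every even $\mu\neq 0$ with $\nu+\mu\neq\mathbf{1}$, and the one remaining even root vector $e_{\mathbf{1}+\nu}$ is then reached in one more bracket; with all even root vectors in hand, clause~(2) gives $[e_\mu,e_{\mu+\rho}]=e_\rho$ whenever $(\mu,\rho)=1$, and a short check shows these reach every odd root $\neq\mathbf{1}$. (Starting from an even root vector one produces an odd one the same way.) Thus $I=M$. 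Since no step uses more than $\Char k=2$, the identical argument over $\bar k$ proves $M$ absolutely simple; perfectness is then automatic and $\dim M=2^n-2$ was obtained in the first step. The case $N_2(n,k)$ runs in the same manner.

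I expect the main obstacle to be the middle step: extracting a single root vector from an arbitrary element of a two-dimensional $T'$-weight space. Unlike in Theorem \ref{thm:simples}, the torus is one dimension short of separating root vectors, so one is forced to use the Lie multiplication, and the degenerate sub-case $\eta=\alpha_j$ needs the room afforded by $n\geq 5$. A secondary (purely bookkeeping) nuisance is carrying the argument through the three clauses of the $N_2$ bracket.
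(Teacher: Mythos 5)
Your proposal is correct and takes essentially the same route as the paper: identify $\DD(L)$ as the span of the $e_\alpha$ with $\alpha\neq\mathbf{1}$, use the toral derivations $\ad(e_{\alpha_i})^2$ to cut a nonzero ideal down to a two-dimensional weight space $\langle e_\beta,e_{\beta+\mathbf{1}}\rangle$, annihilate one component with a well-chosen bracket to extract a single root vector, and then propagate to all of $\DD(L)$. The only deviations are cosmetic: the paper works with the rank-$2m$ torus attached to the subalgebra $N_1(2m,k)$ rather than all of $\langle t_1,\dots,t_n\rangle$, and in the degenerate subcase $\beta=\alpha_j$ it uses a single bracket against $e_{\alpha_i+\alpha_j+\alpha_k}$ where you use a double bracket.
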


	\begin{proof}
	Again, our proof will show the simplicity with $k$ taken to be arbitrary of characteristic $2$.
	
	Let $\hat\alpha = \sum \alpha_i.$ Direct calculation from the definition shows that $\DD(L) = \langle e_\alpha \mid \alpha\neq \hat\alpha \rangle.$ In particular $\DD(L)$ has dimension $2^n-2.$
	
	 Note $\hat\alpha(T)=0$. We have $N_1(2m,k)=\langle e_{\alpha}\mid(\alpha,\alpha_n)=0\rangle$ is a subalgebra of $\DD(L)$. Let $S=\langle \hat t_i\mid 1\leq i\leq 2m\rangle$ be the torus of rank $2m$ associated with this subalgebra as introduced above; thus \[\hat t_i=\sum_{\substack{1\leq j\leq 2m\\j\neq i}}t_i.\]
	
	One checks that the the kernel of $S$ on $L$ is just $\langle e_{\hat\alpha}\rangle$ and so every root space obtained by decomposing $\DD(L)$ with respect to $S$ is $2$-dimensional of the form $\langle e_\alpha, e_{\alpha+\hat\alpha}\rangle$. Without loss of generality, $(\alpha,\alpha_n)=0$, so $\hat t_i(\alpha)=(\alpha,\alpha_i)=t_i(\alpha+\hat\alpha)$. 

Now assume $0\neq x\in I\triangleleft \DD(L)$ with say $x_{\beta}=1$ for some $\beta\in\Phi$. Letting \[t_\beta:=\prod_{\substack{1\leq i\leq n-1,\\ (\beta,\alpha_i)=1}}\hat t_i\prod_{\substack{1\leq i\leq n-1,\\ (\beta,\alpha_i)=0}}t_i,\] we get $t_\beta(x)=x_\beta e_\beta+x_{\beta+\hat\alpha} e_{\beta+\hat\alpha}$.
	
We wish to exhibit a root vector in $I$ and we approach this in cases. 

Suppose $|\beta|=1$ with $\beta\neq\alpha_j$ for any $j$ and suppose $(\beta,\alpha_i)=1$ for some $1\leq i\leq n$. Then $[e_{\beta},e_{\alpha_j}]=e_{\beta+\alpha_j}$; meanwhile $(\beta+\hat\alpha,\alpha_i)=0$, $|\beta+\hat\alpha|=0$ and hence $[e_{\beta+\hat\alpha},e_{\alpha_j}]=0$. Thus $e_{\beta+\alpha_j}\in I$. If $\beta=\alpha_j$, then choose $i,k$ such that $i,j,k$ are pairwise distinct. Then $[e_\beta,e_{\alpha_i+\alpha_j+\alpha_k}]=e_{\alpha_i+\alpha_k}$ whereas $[e_{\beta+\hat\alpha},e_{\alpha_i+\alpha_j+\alpha_k}]=0$. In any case we get a single root vector in $I$. The argument when $|\beta|=0$ is similar.
	
	Thus we have shown $e_\beta \in I$ for some $\beta\in \Phi\setminus\{\hat\alpha\}.$ The rest of the proof follows as in Theorem \ref{thm:simples}.\end{proof}
	
	\subsection*{Acknowledgments} The authors are supported by the Leverhulme Trust Research Project Grant number RPG-2021-080.

\begin{appendices}
	
\section{Sicstus Prolog code}
\begin{lstlisting}
:- use_module(library(clpfd) ).
:- use_module(library(lists)).
:- use_module(library(samsort)). 
:- use_module(library(ordsets)).
:- use_module(library(ugraphs)).
:- use_module(library(between)).

% The following predicate reduced_thin_search is our main predicate for searching for simple thin Lie algebras. 
% The parameter N is inputted by the users. The output is ThinTables which is a list of NxN arrays. Each of which represents a thin Lie algebra of dimension 2^N - 1
% For N = 2,3,4,5 each member of ThinTables represents a simple Lie algebra. We confirm this using graph
% For N >= 4 there are different members of ThinTables that represent the same Lie algebra. See the main body of the paper for does_not_centralise

% reduced_thin_search consists of the following 4 main parts

% 1. INITIAL THIN SEARCH
% The predicate thin_search generates FirstTables which consists of a list of tables each representing a thin Lie algebra
% thin_search itself consists of three main componenets:
% a. Implement the Lie bracket as constraints so that each table produced represents a valid Lie algebra 
% b. Constraints which are necessary for simplicity of the represented Lie algebras via the predicates
%    stop_certain_ideals and act_faithfully. See the paper for the theoretical underpinning of these constraints.
% c. Symmetry breaking constraints via the predicate break_gl2_symmetries. These constraints removes some tables
%    which represent the same Lie algebra. Not all possible symmetries can be added as constraints due to times
%    and memory issues.  
% Note that the order of the predicates inside thin_search do not follow the order listed above.
% The order constraints are added impact the run time. We have optimised the ordering. 

% 2. POST SEARCH SIMPLICTY CHECK
% The predicate remove_non_simple_tables takes in the tables FirstTables and removes certain tables representing
% non simple Lie algebras. See the paper for a description of what ideals are checked for .
% The tables which pass this check are represented by SimpleTables.
% In theory this condition could have been converted to a constraint and added to thin_search.
% However doing this is significantly slower than performing them as a post process.

% 3. LEX REDUCE TABLES
% As mentioned in 1. the predicate break_gl2_symmetries does not implement all possible symmetries as constraints.
% We now implement the rest of these symmetries on the tables SimpleTables and output the results to ReducedTables.
% At this stage no two tables in ReducedTables can be transformed in to each other via permutations of roots. 

% 4. TORAL SWITCHING 
% We now remove further tables from ReducedTables representing the same Lie alegbra.
% For each table in ReducedTables we perform one toral switch for each nilpotent basis element in that table.
% A graph is then constructed with vertex set represented by the elements of ReducedTables 
% and an edge between two vertices if a toral switch transformed the tables represented by the two vertices in to one another.
% For each connected componenet of this graph we take one table from it and output these to ThinTables.

% In addition to the above sections we have the following two sections that collect shared predicates used throughout the code

% 5. SYMMETRY PREDICATES
% These are a collection of predicates used throughout the code that deal with permuting roots.

% 6. UTILITY PREDICATES
% These are a collection of generic predicates used throughout the code.

% An example prompt to run the main predicate is 
% ['ThinSymmetrySicstus.pl'].
% reduced_thin_search(4, ThinTables), maplist( writeln, ThinTables).

%%% 0. THE MAIN PREDICATE %%%
reduced_thin_search( N, ThinTables ) :-
	findall( Rows, (thin_search( Vs, N, Rows), labeling( [], Vs) ), FirstTables ),
	remove_non_simple_tables( FirstTables, SimpleTables ),
	lex_reduce_tables( N, SimpleTables, ReducedTables ),
	once(perform_toral_switchings( N, ReducedTables, ThinTables )).
%%%%%%

%%% 1. INITIAL THIN SEARCH %%%
thin_search( Vs, N, Rows) :-
	M is 2^N-1,
	length(Rows, M),
	maplist(same_length(Rows),Rows),
	append(Rows, Vs), 
	domain( Vs, 0, 1),
	transpose(Rows, Rows), % Lie Bracket constraint
	numlist(M, Indices),
	maplist( set_value_to_zero, Indices, Rows), % Lie Bracket constraint
	stop_certain_ideals(Rows,Indices), % Simplicity constraints
	act_faithfully(Rows,Indices), % Simplicity constraints
	jacobi_identity_full( Indices, Rows), % Lie Bracket constraint
	break_gl2_symmetries( Vs, Rows, N ). % Symmetry breaking constraints

% The Jacobi Identity 
jacobi_identity_full( Indices, Rows) :-
	findall( [A,B,C], ( append( [_,[A],_,[B],_,[C],_], Indices ),  D is xor(B, C), A #\= D ), Triples ),
	maplist( jacobi_identity( Rows ), Triples ).
	jacobi_identity( Rows, [I1,I2,I3] ) :-
	I4 is xor(I1, I2),
	I5 is xor(I1, I3),
	I6 is xor(I2, I3),
	get_entry( Rows, [I1, I2], A ),
	get_entry( Rows, [I3, I4], B ),
	get_entry( Rows, [I1, I3], C ),
	get_entry( Rows, [I2, I5], D ),
	get_entry( Rows, [I2, I3], E ),
	get_entry( Rows, [I1, I6], F ),
	domain( [G,H,I], 0, 1),
	G #>= A + B - 1,
	G #=< A, 
	G #=< B,
	H #>= C + D - 1,
	H #=< C, 
	H #=< D,
	I #>= E + F - 1,
	I #=< E, 
	I #=< F,
	G + H + I #\= 1,
	G + H + I #\= 3.

% Simplicity conditions 
stop_certain_ideals(Rows,Indices) :-
	make_L1_inds( Indices, L1s),
	maplist( check_L1_makes_L0(Rows, Indices), L1s ).
make_L1_inds( Indices, L1s) :-
	maplist( root_to_L1(Indices), Indices, L1s ).
root_to_L1(Indices, Root, L1 ) :-
	findall(A, (member(A,Indices), B is (Root /\ A), dec2bin(B, Bin), sum(Bin,#=,S), (S mod 2) #= 1) , L1).
check_L1_makes_L0(Rows, Indices, L1) :-
	exclude( member_(L1), Indices, L0 ),
	maplist(is_made_check(Rows, L1), L0).
is_made_check( Rows, L1, A ) :-
	maplist( is_made_check_helper(Rows, A), L1, Entries ),
	sum( Entries, #>, 0 ).
is_made_check_helper( Rows, A, B, Entry ) :-
	C is xor(A, B),
	get_entry( Rows, [B,C], Entry ).
act_faithfully( Rows, Indices ) :-
	make_L1_inds( Indices, L1s ),
	maplist( check_L0_acts_faithfully( Rows, Indices ), L1s ).
check_L0_acts_faithfully( Rows, Indices, L1 ) :-
	exclude( member_(L1), Indices, L0 ),
	maplist(does_not_centralise(Rows, L1), L0).
does_not_centralise( Rows, L1, X ) :-
	maplist( does_not_centralise_helper( Rows, X ), L1, Entries ),
	sum( Entries, #>, 0 ).
does_not_centralise_helper( Rows, X, A, Entry ) :-
	get_entry( Rows, [A,X], Entry ).

%%% Symmetry breaking code %%%

% gl_2 is hardcoded due to its small size
get_gl2( GL2 ) :- 
	GL2 = [ [[1,0],[1,1]],[[1,1],[0,1]],[[1,1],[1,0]],[[0,1],[1,1]],[[0,1],[1,0]] ].

% For each pair of simple roots and each element of gl_2
% create a symmetry breaking constrint
break_gl2_symmetries( Vs, Rows, N ) :-
	get_gl2( GL2 ),
	numlist( N, SimpleInds ),
	findall( [A,B], ( member(A, SimpleInds), member(B, SimpleInds), A #< B ), Pairs ),
	maplist( break_gl2_symmetry( Vs, Rows, N, GL2), Pairs ).
break_gl2_symmetry( Vs, Rows, N, GL2, [J,K] ) :-
	maplist( add_to_gln_small( N, [J, K] ), GL2, SmolGLN ),
	get_roots(N,Roots), 
	make_powers(N,Powers),
	maplist( make_perm(Roots,Powers), SmolGLN, RowPerms ),
	maplist( break_symmetry( Vs, Rows), RowPerms ). 

% Create the subset of gl_n we create constraints for
add_to_gln_small( N, [J,K], Mat1, NewMat ) :- 
	M is N - 2,
	row_of_n_zeros( M, ZeroRow ),
	nth1( 1, Mat1, Row1 ),
	nth1( 2, Mat1, Row2 ),
	nth1( 1, Row1, A ),
	nth1( 2, Row1, B ),
	nth1( 1, Row2, C ),
	nth1( 2, Row2, D ),
	place_entry( J, A, ZeroRow, Row3 ),
	place_entry( K, B, Row3, Row4 ), % Place this at pos J
	place_entry( J, C, ZeroRow, Row5 ),
	place_entry( K, D, Row5, Row6 ), % Place this at pos K
	numlist( N, Indices ),
	findall( I, ( member( I, Indices ), I #\= J, I #\= K ), IdInds ),
	maplist( make_kth_row( Indices ), IdInds, Mat2 ),
	place_entry( J, Row4, Mat2, Mat3 ),
	place_entry( K, Row6, Mat3, NewMat ).

% Create the symmetry breaking constraint
% RowPerm is a permutation generated by an element of gl_n
% RowPerm is applied to the rows and columns of Row to obtain NewerRows
% Add the constraint that Rows is lexicographically lower than NewerRows
break_symmetry( Vs, Rows, RowPerm ) :-
	maplist(permute_rows(Rows), RowPerm, NewRows ),
	transpose(NewRows, TNewRows),
	maplist(permute_rows(TNewRows), RowPerm, NewerRows ),
	same_length( Vs, Ns ),
	append(NewerRows, Ns),
	lex_chain( [ Vs, Ns ] ).
%%%%%%

%%% 2. POST SEARCH SIMPLICTY CHECK %%%
% Search for certain non-trivial ideals for each table in Tables
% See paper for explanation
remove_non_simple_tables( Tables, SimpleTables ) :-
	include( simple_check, Tables, SimpleTables ).
	simple_check( Rows ) :-
	length( Rows, N ),
	numlist( N, Indices ),
	maplist( my_sum, Rows, RowSums ),
	sort( RowSums, UniqueRowSums ),
	findall(A, (member( B, UniqueRowSums ), A #= B + 1), IdealSizes ),
	maplist( check_for_ideals( Rows, Indices ), IdealSizes ).
sum_plus_one( Elems, Tot ) :-
	sum( Elems, #=, Tot0 ),
	Tot #= Tot0 + 1.
check_for_ideals(Rows, Indices, IdealSize ) :-
	findall( A, ( member(A, Indices), nth1(A, Rows, Row), sum(Row, #=, S), S #= IdealSize - 1), CorrectRankRoots),
	findall( A, ( length( A, IdealSize), subset_set( A, CorrectRankRoots ) ), PossibleIdeals ),
	maplist( check_ideal( Rows ), PossibleIdeals ).
check_ideal( Rows, PossibleIdeal ) :-
	maplist( check_ideal_helper(Rows, PossibleIdeal), PossibleIdeal, Mat ),
	append( Mat, Ms ),
	sum( Ms, #=, Tot ),
	length( PossibleIdeal, N ),
	M is N^2 - N,
	Tot #\= M.
check_ideal_helper( Rows, PossibleIdeal, X, Row ) :-
	maplist( check_ideal_helper_2( Rows, X), PossibleIdeal, Row ).
check_ideal_helper_2( _, X, X, Entry ) :-
	Entry #= 0.
check_ideal_helper_2( Rows, X, Y, Entry ) :-
	X #\= Y,
	I is xor(X,Y),
	get_entry( Rows, [I,X], Entry).
%%%%%%

%%% 3. LEX REDUCE TABLES %%%
% Remove all tables from SimpleTables that are not in their lexicographically min form
lex_reduce_tables( N, SimpleTables, ReducedTables ) :-
get_roots( N, Roots ),
make_powers( N, Powers ),
% For each table calculate its row sums and partition SimpleTables based on this
populate_row_sum_tables_list( SimpleTables, TablesList ),
% For each list of tables remove tables not in lexicographically min form
maplist( lex_reduce(N, Roots, Powers), TablesList, ReducedTablesList ),
% Combine ReducedTablesList in to ReducedTables
append( ReducedTablesList, ReducedTables ).

% Recursively generate ReducedList from List
% 1. Add the element of List with min lexicographical order to ReducedList, call this MinTable
% 2. Remove all elements equivalement to MinTable via row permutations from List
% 3. Repeat 1. and 2. until List is empty 
lex_reduce( N, Roots, Powers, List, ReducedList ) :-
	lex_reduce( N, Roots, Powers, List, [], ReducedList).
lex_reduce( _, _, _, [], Mins, ReducedList) :- 
	ReducedList = Mins.
lex_reduce( _, _, _, List, Mins, ReducedList) :-
	length( List, 1 ),
	append( Mins, List, ReducedList ).
lex_reduce( N, Roots, Powers, List, Mins, ReducedList) :-
	length( List, K ),
	K #> 1,
	min_member( my_lex, MinTable, List ),
	sorted_row_sums( MinTable, SortedRowSums ),
	exclude( can_permute_dispatcher( N, Roots, Powers, SortedRowSums, MinTable), List,  NewList ),
	append( Mins, [MinTable], NewMins ),
	lex_reduce( N, Roots, Powers, NewList, NewMins, ReducedList).

% Calculate row sums and partition with respect to row sums
populate_row_sum_tables_list( SimpleTables, TablesList ) :-
	maplist( full_sorted_row_sums, SimpleTables, TooManyRowSums ),
	list_to_ord_set( TooManyRowSums, RowSumsList ),
	maplist( filter_by_row_sum(SimpleTables), RowSumsList, TablesList ).
filter_by_row_sum( SimpleTables, RowSums, Tables ) :-
	findall( Table, ( member( Table, SimpleTables ), full_sorted_row_sums( Table, RowSums ) ), Tables ).

% lexicographical order for arrays
my_lex( A, B ) :-
	append( A, As ),
	append( B, Bs ),
	lex_chain( [As, Bs] ).
%%%%%%

%%% 4. TORAL SWITCHING %%%
perform_toral_switchings( N, Tables, SwitchedTables ) :-
	M is 2^N-1,
	K is 2^(N-1),
	numlist( M, Indices ),
	get_roots( N,Roots ),
	make_powers( N, Powers ), 
	% For each table in Tables find all the basis elements which are nilpotent
	maplist(locate_basis_nilps( Indices, K ), Tables, NilpIndicesList ),
	% Make a torus which all switches will be made with respect to
	make_base_torus( BaseTorus, N, Indices ),
	findall( [A,B], ( member(A, Indices), member(B, Indices), A #< B ), Pairs ), 
	% Make a torus switch for each nilpotent basis element
	maplist( make_torus_switches( Indices, Pairs, Roots, BaseTorus ), Tables, NilpIndicesList, SwitchedTablesList ),
	% Lexicographically reduce each switched table
	once(canonical_order_tables_list( N, Roots, Powers, Tables, SwitchedTablesList, OrderedTablesList )), 
	% Make the graph based on toral switchings. See paper for details
	once(make_adjacency_mat( Tables, OrderedTablesList, AdjacencyMat )),
	adjacency_mat_to_ugraph( AdjacencyMat, SwitchingGraph ),
	conn_comps( SwitchingGraph, Comps ),
	% Return one table per connected componenet of the graph
	maplist( get_min_from_comp( Tables ), Comps, SwitchedTables ).

% Finds nilpotent basis elements
locate_basis_nilps( Indices, K, Rows, NilpIndices ) :-
	include( locate_basis_nilps_helper( Rows, K ), Indices, NilpIndices ).
locate_basis_nilps_helper( Rows, K, I ) :-
	nth1( I, Rows, Row ), 
	sum( Row, #<, K ).

% Makes standard torus to be used in all switchings
make_base_torus( BaseTorus, N, Indices ) :-
	length( BaseTorus, N ),
	numlist( N, SmolIndices ),
	maplist( make_base_torus_row( Indices ), SmolIndices, BaseTorus ).
make_base_torus_row( Indices, RowIndex, BaseTorusRow ) :-
	RootIndex is 2^(RowIndex - 1),
	maplist( make_base_torus_entry( RootIndex ), Indices, BaseTorusRow ).
make_base_torus_entry( RootIndex, ColIndex, Entry ) :-
	X is ( RootIndex /\ ColIndex ),
	make_base_torus_entry_helper( X, Entry ).
	make_base_torus_entry_helper( 0, 0 ).
make_base_torus_entry_helper( X, 1 ) :-
	X #\= 0.

% Main toral switching code
make_torus_switches( _, _, _, _, _, [], SwitchedTables ) :-
	SwitchedTables = []. % no nilpotent basis elements to switch with respect to
make_torus_switches( Indices, Pairs, Roots, BaseTorus, Table, NilpIndices, SwitchedTables ) :-
	length( NilpIndices, K ),
	K #> 0,
	maplist( switch_torus( Indices, Pairs, Table, BaseTorus, Roots), NilpIndices, SwitchedTables ).
switch_torus( Indices, Pairs, Table, BaseTorus, Roots, NilpIndex, SwitchedTable ) :-
	maplist( modify_toral_element_checker( NilpIndex ), BaseTorus, ModifyToralElementChecker ),
	maplist( make_new_basis( Indices, Table, BaseTorus, NilpIndex, ModifyToralElementChecker), Roots, NewBasis ),
	make_new_table( Indices, Pairs, Table, NewBasis, SwitchedTable ).

% Checks wether the toral element in the switched torus has been modified
modify_toral_element_checker( NilpIndex, ToralRow, ModifyToralElement ) :-
	nth1( NilpIndex, ToralRow, ModifyToralElement ).

% NewBasisElement is the basis element in the rootspace with respect to Root after the toral switch
% It is expressed as a list in terms of the original thin basis
make_new_basis( Indices, Table, BaseTorus, NilpIndex, ModifyToralElementChecker, Root, NewBasisElement ) :-
	same_length( Table, NewBasisElement ),
	domain( NewBasisElement, 0, 1),
	once( make_new_basis_helper( Indices, Table, BaseTorus, NilpIndex, ModifyToralElementChecker, Root, NewBasisElement ) ),
	labeling( [], NewBasisElement ) .
make_new_basis_helper( Indices, Table, BaseTorus, NilpIndex, ModifyToralElementChecker, Root, NewBasisElement ) :-
	sum( NewBasisElement, #>, 0 ),
	maplist( apply_root_value( Indices, Table, NilpIndex, NewBasisElement), BaseTorus, ModifyToralElementChecker, Root ).

% Apply a new toral element on NewBasisElement and kill or stabalise it depending on RootVal
% NewBasisElement is determined by the constraints created here
apply_root_value( Indices, Table, NilpIndex, NewBasisElement, ToralRow, ModifyToralElement, RootVal ) :-
	maplist( times, ToralRow, NewBasisElement, TorusOnElement),
	nth1( NilpIndex, Table, NilpRow ),
	maplist( nilp_on_element(NilpRow, NewBasisElement, NilpIndex), Indices, NilpOnElement ),
	maplist( times(ModifyToralElement), NilpOnElement, ScaledNilpOnElement ),
	maplist( sum_mod_2, TorusOnElement, ScaledNilpOnElement, ActedOnElement ),
	maplist( times(RootVal), NewBasisElement, ScaledNewBasisElement ),
	maplist( eq, ScaledNewBasisElement, ActedOnElement).
nilp_on_element(_, _, NilpIndex, NilpIndex, NilpOnElementEntry ) :- 
	NilpOnElementEntry #= 0.
nilp_on_element(NilpRow, NewBasisElement, NilpIndex, Index, NilpOnElementEntry ) :-
	NilpIndex #\= Index,
	NewIndex is xor(NilpIndex, Index),
	nth1( NewIndex, NewBasisElement, V),
	nth1( NewIndex, NilpRow, NilpOnElementEntry0 ),
	NilpOnElementEntry #= NilpOnElementEntry0 * V.

% Given the new thin basis NewBasis calculate its thin table 
make_new_table( Indices, Pairs, Table, NewBasis, NewTable ) :-
	maplist( set_diagonal( NewTable ), Indices ),
	maplist( intify_basis(Indices), NewBasis, IntBasis ),
	same_length( Table, NewTable ),
	maplist( same_length(NewTable), NewTable ),
	maplist( make_new_table_entry( Indices, Table, IntBasis, NewTable ), Pairs ).
make_new_table_entry( Indices, Table, IntBasis, NewTable, [I1, I2] ) :-
	I3 is xor( I1, I2 ),
	nth1( I1, IntBasis, Ints1 ),
	nth1( I2, IntBasis, Ints2 ),
	nth1( I3, IntBasis, Ints3 ),
	nth1( 1, Ints3, MainInt ),
	maplist( make_xor_pair(MainInt), Indices, XORPairs ),
	include( filter_xor_pairs( Ints1, Ints2 ), XORPairs, FilteredPairs ), 
	get_entries( Table, FilteredPairs, Entries ),
	sum(Entries, #=, Tot),
	Val #= ( Tot mod 2 ),
	get_entry( NewTable, [I1, I2], Val ),
	get_entry( NewTable, [I2, I1], Val ).
% The below predicates appear as they are due to Sicstus not having a version
% of xor compatible with  the # operation in CLPFD. 
filter_xor_pairs( Ints1, Ints2, [I, J ] ) :-
	member(I, Ints1),
	member(J, Ints2).
make_xor_pair( A, B, [B, C] ) :-
	C is xor( A, B).
intify_basis( Indices, BasisRow, Ints ) :-
	include( intify_basis_helper(BasisRow), Indices, Ints ).
intify_basis_helper( BasisRow, I ) :-
	nth1( I, BasisRow, 1 ).
set_diagonal( NewTable, I ) :-
	get_entry( NewTable, [I,I], 0 ).

% Lexicographically reduce switched tables. Similar methods to 3. LEX REDUCE TABLES 
canonical_order_tables_list( N, Roots, Powers, Tables, SwitchedTablesList, OrderedTablesList) :-
	maplist( make_table_row_sum_pair, Tables, TablesWithSums ),
	maplist( canonical_order_rows0( N, Roots, Powers, TablesWithSums ), SwitchedTablesList, OrderedTablesList ).
make_table_row_sum_pair( Table, [X, Y] ) :-
	X = Table,
	full_sorted_row_sums( Table, Y ).
canonical_order_rows0( _, _, _, _, [], OrderedTables) :-
	OrderedTables = [].
canonical_order_rows0( N, Roots, Powers, SimpleTablesWithSums, Tables, OrderedTables) :-
	length(Tables, K),
	K #> 0,
	maplist(canonical_order_rows( N, Roots, Powers, SimpleTablesWithSums ), Tables, OrderedTables).
canonical_order_rows( N, Roots, Powers, SimpleTablesWithSums, Rows, OrderedRows) :-
	full_sorted_row_sums( Rows, RowSums),
	sort( RowSums, SortedRowSums),
	findall( Table, ( member(TableWithSums, SimpleTablesWithSums), TableWithSums = [ Table, RowSums ] ), SimpleTables ),
	first_sol( lex_reduce_to_simple( N, Roots, Powers, Rows, SortedRowSums ), SimpleTables, OrderedRows ).
lex_reduce_to_simple( N, Roots, Powers, Rows, SortedRowSums, Table ) :-
	can_permute_dispatcher( N, Roots, Powers, SortedRowSums, Rows, Table).

% Make the toral switching graph
% If a table T1 switches to T2 then T2 might not have switched to T1
% Thus we calculate a directed graph first
% We then remove the directions on edges 
make_adjacency_mat( Tables, OrderedTablesList, AdjacencyMat ) :-
	length( Tables, M ),
	numlist( M, Indices ),
	maplist( make_adjacency_row( Tables, OrderedTablesList, Indices ), Indices, Mat ),
	transpose( Mat, TMat),
	maplist(maplist( my_max ), Mat, TMat, AdjacencyMat).
make_adjacency_row( Tables, OrderedTablesList, Indices, RowIndex, AdjacencyRow ) :-
	maplist( make_adjacency_entry( Tables, OrderedTablesList, RowIndex ), Indices, AdjacencyRow ).
make_adjacency_entry( _, _, RowIndex , RowIndex, AdjacencyEntry ) :-
	AdjacencyEntry #= 0.
make_adjacency_entry( Tables, OrderedTablesList, RowIndex , ColIndex, AdjacencyEntry ) :-
	RowIndex #\= ColIndex,
	nth1( RowIndex, Tables, Table ),
	nth1( ColIndex, OrderedTablesList, TableList ),
	set_adjacency_entry( Table, TableList, AdjacencyEntry ).
set_adjacency_entry( Table, TableList, AdjacencyEntry ) :-
	member( Table, TableList ),
	AdjacencyEntry #= 1.
set_adjacency_entry( Table, TableList, AdjacencyEntry ) :-
	(\+ member( Table, TableList )),
	AdjacencyEntry #= 0.
	% Convert an adjacency matrix to a ugraph
adjacency_mat_to_ugraph( A, G ) :-
	length( A, N ),
	numlist(N, Indices),
	maplist( collect_nbrs(A,Indices), Indices, NbrsList ), 
	maplist( make_vertex, Indices, NbrsList, G ).
make_vertex( Index, Nbrs, V ) :-
	V = Index-Nbrs. % This is the ugraph notation for defining a vertex and its neighbours
% Find the neighbours (Nbrs) of a vertex (Index)
collect_nbrs(A, Indices, Index, Nbrs ) :-
	nth1( Index, A, Row ),
	findall( X, ( member( X, Indices ), nth1( X, Row, 1 ) ), Nbrs).

% Calculte the connected componenets of a graph.
% We make use of transitive_closure from ugraphs
conn_comps( G, Comps ) :-
	length( G, N), 
	numlist(N, Indices),
	transitive_closure(G, C),
	maplist( get_closure(C), Indices, RepeatComps  ),
	setof( Comp, member( Comp, RepeatComps ), Comps ).
get_closure(C, Index, Closure ) :-
	neighbours( Index, C, PreClosure),
	add_self( Index, PreClosure, NotSortedClosure),
	sort( NotSortedClosure, Closure).
add_self( Index, A, B) :-
	member(Index, A),
	B = A.
add_self( Index, A, B) :-
	(\+ member( Index, A )),
	append( [A, [Index]], B ).
get_min_from_comp( Tables, Comp, MinTable ) :-
	get_values( Tables, Comp, CompTables ),
	min_member( my_lex, MinTable, CompTables ).
%%%%%%

%%% 5. SYMMETRY PREDICATES %%%
% Code used in symmetry breaking and lexicographically reducing tables

% Make a permutation of rows based on an element of gl_n acting on the simple roots
make_perm(Roots,Powers, Mat, RowPerm ) :-
	maplist( perm_it(Powers, Mat), Roots, RowPerm ).
perm_it(Powers, Mat, Root, Entry ) :-
	act_mat(Mat, Root, RootOut),
	bin_2_dec(Powers, RootOut, Entry).

% Apply a permutation
permute_rows(Rows, PermIndex, NewRow ) :-
	nth1( PermIndex, Rows, NewRow).

% The predicate can_permute_dispatcher checks if Table1 can be permuted in to Table2
% This is used in both lex reducing and toral switching
can_permute_dispatcher( N, Roots, Powers, SortedRowSums, Table1, Table2 ) :-
	length( Mat, N ),
	maplist(same_length(Mat), Mat),
	once(can_permute( N, Roots, Powers, Table1, Table2, Mat, SortedRowSums ) ).
can_permute( N, Roots, Powers, T1, T2, Mat, SortedRowSums ) :-
	length( Mat, N ),
	append( Mat, Ms), 
	domain( Ms, 0, 1),
	% Use the row sums of T1 and T2 to crete constraints on which rows of T1 are mapped to which rows of T2
	partition_by_row_sums( T1, SortedRowSums, Partition1 ),
	partition_by_row_sums( T2, SortedRowSums, Partition2 ),
	maplist( map_partitions(N, Mat, Powers), Partition1, Partition2 ),
	make_perm( Roots, Powers, Mat, RowPerm ),
	maplist(permute_rows(T2), RowPerm, NewRows ),
	transpose(NewRows, TNewRows),
	maplist(permute_rows(TNewRows), RowPerm, T1 ).
map_partitions( N, Mat, _, L1, L2 ) :-
	length( L1, K ),
	K #= 1,
	Rank is N,
	nth1( 1, L1, N1 ),
	nth1( 1, L2, N2 ),
	dec_2_bin( Rank, N1, B1 ),
	dec_2_bin( Rank, N2, B2 ),
	act_mat( Mat, B1, B2 ).
map_partitions( N, Mat, Powers, L1, L2 ) :-
	length( L1, K ),
	K #> 1,
	Rank is N,
	maplist( dec_2_bin( Rank ), L1, Domain ),
	maplist( set_range( Mat, Powers, L2 ), Domain ).
set_range( Mat, Powers, Range, B1 ) :-
	act_mat( Mat, B1, B2 ),
	bin_2_dec( Powers, B2, D ),
	element( _, Range, D ).
partition_by_row_sums( Mat, SortedRowSums, Partition ) :-
	maplist( has_row_sum(Mat), SortedRowSums, Partition ).
has_row_sum( Mat, S, L ) :-
	findall( I, ( nth1( I, Mat, R ), sum( R, #=, S ) ), L ).

% row sums with no repeats
sorted_row_sums( M, Sorted ) :-
	maplist( my_sum, M, Sums ),
	sort( Sums, Sorted ).
% row sums including repeats. Sicstus needs samsort importing
full_sorted_row_sums( M, Sorted ) :-
	maplist( my_sum, M, Sums ),
	samsort( Sums, Sorted ).
%%%%%%

%%%%%%

%%% 6. UTILITY PREDICATES %%%
% generic predicates used throughout

% Basic computations
plus(X,Y,Z) :- X+Y #= Z.
eq(A,B) :- A #= B.
equal_to_zero( X ) :- X #= 0.
sum_mod_2( A, B, C ) :- ((A + B) mod 2 ) #= C.
times(A,B,C) :- A*B  #= C.

% equivalent to sum( Elems, #=, Tot )
my_sum( Elems, Tot ) :-
	sum( Elems, #=, Tot ).

% C is the max of A and B
my_max( A, B, C ) :-
	A #< B,
	C #= B.
my_max( A, B, C ) :-
	A #>= B,
	C #= A.

% Get entries from lists and arrays
get_value( Row, Index, Value ) :-
	nth1( Index, Row, Value).
get_values( Row, Indices, Values ) :-
	maplist( get_value(Row), Indices, Values ).
get_entry( Rows, [RowIndex, ColIndex], Entry ) :-
	nth1( RowIndex, Rows, Row),
	nth1( ColIndex, Row, Entry).
get_entries( Rows, Indices, Entries ) :-
	maplist( get_entry(Rows), Indices, Entries ).

% set an element of a list to be 0
set_value_to_zero( Index, Row ) :-
	nth1( Index, Row, 0).

% Powers is a list of powers of 2
make_powers(N, Powers) :-
	make_powers0(N, Roots),
	reverse(Roots, Powers).
make_powers0(N, Roots) :-
	numlist(N, Indices),
	maplist(make_powers_helper, Indices, Roots).  
make_powers_helper(Ind, Root) :-
	Val is 2^(Ind - 1),
	Root #= Val.

% binary and decimal conversion
dec_2_bin(Rank,N,L) :-
	dec2bin(N, L0),
	length(L0,K),
	length(L,Rank),
	M #= Rank - K,
	row_of_n_zeros( M, Zs ),
	append(Zs, L0, L).
	dec2bin(0,[0]).
	dec2bin(1,[1]).
dec2bin(N,L):- 
	N > 1,
	X #= ( N mod 2 ),
	Y #= (N // 2),  
	dec2bin(Y,L1), 
	append(L1, [X], L).
bin_2_dec(Powers, L, N) :-
	maplist(times, Powers, L, P),
	sum(P,#=,N).

member_( L, E ) :- 
	member( E, L ).

% generate roots as elements of GF(2)^N
get_roots(N, Roots) :-
	findall(L,( length(L,N), domain( L, 0, 1 ), labeling( [], L)  ),RootsZ),
	RootsZ=[_|Roots].

% Mat acting on VecIn gives VecOut
act_mat(Mat, VecIn, VecOut) :- % NB VecIn cannot have variables
	maplist( my_scalar_prod( VecIn ), Mat, VecOut).
my_scalar_prod( V1, M1, X) :- 
	same_length( V1, Prods ),
	maplist( times, V1, M1, Prods ),
	sum( Prods, #=, Y ), 
	(Y mod 2) #= X.

% make a list of all zeroes
row_of_n_zeros( N, Row ) :-
	length( Row, N ),
	maplist( equal_to_zero, Row ).

% KthRow has the same length as Indices, a 1 in position K, and zeroes elsewhere
make_kth_row( Indices, K, KthRow ) :-
	maplist( set_kth_row( K ), Indices, KthRow ).
	set_kth_row(  K, K, Entry ) :- Entry #= 1.
	set_kth_row(  K, Index, Entry ) :- Index #\= K, Entry #= 0. 

% adding Entry to X at position I obtains Y
place_entry( Ind, Entry, X, Y ) :- 
	L is Ind - 1,
	length( A, L ),
	append( A, B, X ),
	append( [ A, [Entry], B ], Y ).

% recursively check if a set is a subset of another set
subset_set([], _).
subset_set([X|Xs], S) :-
	append(_, [X|S1], S),
	subset_set(Xs, S1).

% Sol is the first element of List to satisfy Goal
first_sol(Goal, List, Sol) :-
	first_sol_(List, Goal, Sol).
	first_sol_([], _, []).
first_sol_([X1|Xs1], P, Sol) :-
	(   call(P, X1) ->  Sol = X1
	;   first_sol_(Xs1, P, Sol )
	).

% Sictus does not have writeln and one of the authors is fond of it
writeln( Stream ) :-
	write( Stream ),
	write('\n').

% Sicstus needs maplist/4 defining manually
maplist(Pred, Ws, Xs, Ys, Zs) :-
	( foreach(W,Ws),
	foreach(X,Xs),
	foreach(Y,Ys),
	foreach(Z,Zs),
	param(Pred)
	do call(Pred, W, X, Y, Z)
	).
%%%%%%

\end{lstlisting}

\section{SWI monotonic prolog code}
\begin{lstlisting}
:- use_module(library(clpfd)), set_prolog_flag(clpfd_monotonic, true).

% The following code runs in SWI prolog with monotonic enabled. 
% See the above Sicstus code for necessary comments 

%%% 0. THE MAIN PREDICATE %%%
reduced_thin_search( N, ThinTables ) :-
	M is 2**N-1,
	findall( Rows, (thin_search( Vs, N, M, Rows), label(Vs) ), FirstTables ),
	remove_non_simple_tables( FirstTables, SimpleTables ),
	lex_reduce_tables( N, SimpleTables, ReducedTables ),
	once(perform_toral_switchings( N, ReducedTables, ThinTables )).
%%%%%%

%%% 1. INITIAL THIN SEARCH %%%
thin_search( Vs, N, M, Rows) :-
	length(Rows, M),
	maplist(same_length(Rows),Rows),
	append(Rows, Vs), Vs ins 0..1,
	numlist(M, Indices),
	transpose(Rows, Rows),
	maplist( set_value_to_zero, Indices, Rows),
	stop_certain_ideals(Rows,Indices),
	act_faithfully(Rows,Indices),
	jacobi_identity_full( Indices, Rows),
	break_gl2_symmetries( Vs, Rows, N ).

% The Jacobi Identity 
jacobi_identity_full( Indices, Rows) :-
	findall( [A,B,C], ( append( [_,[A],_,[B],_,[C],_], Indices ),  A #\= (B xor C) ), Triples ),
	maplist( jacobi_identity( Rows ), Triples ).
jacobi_identity( Rows, [I1,I2,I3] ) :-
	#(I4) #= (#(I1) xor #(I2)),
	#(I5) #= (#(I1) xor #(I3)),
	#(I6) #= (#(I2) xor #(I3)),
	get_entry( Rows, [I1, I2], A ),
	get_entry( Rows, [I3, I4], B ),
	get_entry( Rows, [I1, I3], C ),
	get_entry( Rows, [I2, I5], D ),
	get_entry( Rows, [I2, I3], E ),
	get_entry( Rows, [I1, I6], F ),
	[G,H,I] ins 0..1,
	#(G) #>= #(A) + #(B) - 1,
	#(G) #=< #(A), 
	#(G) #=< #(B),
	#(H) #>= #(C) + #(D) - 1,
	#(H) #=< #(C), 
	#(H) #=< #(D),
	#(I) #>= #(E) + #(F) - 1,
	#(I) #=< #(E), 
	#(I) #=< #(F),
	#(G) + #(H) + #(I) #\= 1,
	#(G) + #(H) + #(I) #\= 3.

% Simplicity conditions 
stop_certain_ideals(Rows,Indices) :-
	make_L1_inds( Indices, L1s),
	maplist( check_L1_makes_L0(Rows, Indices), L1s ).
make_L1_inds( Indices, L1s) :-
	maplist( root_to_L1(Indices), Indices, L1s ).
root_to_L1(Indices, Root, L1 ) :-
	findall( A, ( member(A,Indices), #(B) #= (#(Root) /\ #(A)), dec2bin(B,Bin ), sum(Bin,#=,#(S)), (#(S) mod 2) #= 1) L1).
check_L1_makes_L0(Rows, Indices, L1) :-
	exclude( member_(L1), Indices, L0 ),
	maplist(is_made_check(Rows, L1), L0)
is_made_check( Rows, L1, A ) :-
	maplist( is_made_check_helper(Rows, A), L1, Entries ),
	sum( Entries, #>, 0 ).
is_made_check_helper( Rows, A, B, Entry ) :-
	#(C) #= (#(A) xor #(B)),
	get_entry( Rows, [B,C], Entry ).
act_faithfully( Rows, Indices ) :-
	make_L1_inds( Indices, L1s ),
	maplist( check_L0_acts_faithfully( Rows, Indices ), L1s ).
check_L0_acts_faithfully( Rows, Indices, L1 ) :-
	exclude( member_(L1), Indices, L0 ),
	maplist(does_not_centralise(Rows, L1), L0).
does_not_centralise( Rows, L1, X ) :-
	maplist( does_not_centralise_helper( Rows, X ), L1, Entries ),
	sum( Entries, #>, 0 ).
does_not_centralise_helper( Rows, X, A, Entry ) :-
	get_entry( Rows, [A,X], Entry ).

%%% Symmetry breaking code %%%
get_gl2( GL2 ) :- GL2 = [ [[1,0],[1,1]],[[1,1],[0,1]],[[1,1],[1,0]],[[0,1],[1,1]],[[0,1],[1,0]] ].

break_gl2_symmetries( Vs, Rows, N ) :-
	get_gl2( GL2 ),
	numlist( N, SimpleInds ),
	findall( [A,B], ( member(A, SimpleInds), member(B, SimpleInds), #(A) #< #(B) ), Pairs ), 
	get_roots( N, Roots ), 
	make_powers( N, Powers ),
	numlist( N, Indices ),
	maplist( break_gl2_symmetry( Vs, Rows, N, Indices, Roots, Powers, GL2), Pairs ).
break_gl2_symmetry( Vs, Rows, N, Indices, Roots, Powers, GL2, [J,K] ) :-
	maplist( add_to_gln_small( N, Indices, [J, K] ), GL2, SmallGLN ),
	maplist( make_perm(Roots,Powers), SmallGLN, RowPerms ),
	maplist( break_symmetry( Vs, Rows), RowPerms ). 

add_to_gln_small( N, Indices, [J,K], Mat1, NewMat ) :- 
	M is N - 2,
	row_of_n_zeros( M, ZeroRow ),
	nth1( 1, Mat1, Row1 ),
	nth1( 2, Mat1, Row2 ),
	nth1( 1, Row1, A ),
	nth1( 2, Row1, B ),
	nth1( 1, Row2, C ),
	nth1( 2, Row2, D ),
	place_entry( J, A, ZeroRow, Row3 ),
	place_entry( K, B, Row3, Row4 ), % Place this at pos J
	place_entry( J, C, ZeroRow, Row5 ),
	place_entry( K, D, Row5, Row6 ), % Place this at pos K
	exclude( member_([J,K]), Indices, IdInds ),
	maplist( make_kth_row( Indices ), IdInds, Mat2 ),
	place_entry( J, Row4, Mat2, Mat3 ),
	place_entry( K, Row6, Mat3, NewMat ).

break_symmetry( Vs, Rows, RowPerm ) :-
	maplist(permute_rows(Rows), RowPerm, NewRows ),
	transpose(NewRows, TNewRows),
	maplist(permute_rows(TNewRows), RowPerm, NewerRows ),
	same_length( Vs, Ns ),
	append(NewerRows, Ns),
	lex_chain( [ Vs, Ns ] ).
%%%%%%

%%% 2. POST SEARCH SIMPLICTY CHECK %%%
remove_non_simple_tables( Tables, SimpleTables ) :-
	include( simple_check, Tables, SimpleTables ).
simple_check( Rows ) :-
	length( Rows, N ),
	numlist( N, Indices ),
	maplist( my_sum, Rows, RowSums ),
	sort( RowSums, UniqueRowSums ),
	maplist( plus(1), UniqueRowSums, IdealSizes ),
	maplist( check_for_ideals( Rows, Indices ), IdealSizes ).
check_for_ideals(Rows, Indices, IdealSize ) :-
	S is IdealSize - 1,
	include( check_for_ideal_helper( Rows, S ), Indices, CorrectRankRoots ),
	findall( I, ( length( I, IdealSize), subset_set( I, CorrectRankRoots ) ), PossibleIdeals ),
	maplist( check_ideal( Rows ), PossibleIdeals ).
check_for_ideal_helper( Rows, S, I ) :-
	nth1( I, Rows, Row ), 
	sum( Row, #=, #(S) ).
check_ideal( Rows, PossibleIdeal ) :-
	maplist( check_ideal_helper( Rows, PossibleIdeal ), PossibleIdeal, Mat ),
	append( Mat, Ms ),
	sum( Ms, #=, #(Tot) ),
	length( PossibleIdeal, N ),
	M is N^2 - N,
	#(Tot) #\= #(M).
check_ideal_helper( Rows, PossibleIdeal, X, Row ) :-
	maplist( check_ideal_helperer( Rows, X ), PossibleIdeal, Row ).
check_ideal_helperer( _, X, X, Entry ) :-
	#(Entry) #= 0.
check_ideal_helperer( Rows, X, Y, Entry ) :-
	#(X) #\= #(Y),
	#(I) #= ( #(X) xor #(Y) ),
	get_entry( Rows, [I,X], Entry).
	%%%%%%

%%% 3. LEX REDUCE TABLES %%%
lex_reduce_tables( N, SimpleTables, ReducedTables ) :-
	populate_row_sum_tables_list( SimpleTables, TablesList ),
	maplist( lex_reduce(N), TablesList, ReducedTablesList ),
	append( ReducedTablesList, ReducedTables ).

lex_reduce( N, List, ReducedList ) :-
	lex_reduce( N, List, [], ReducedList).
lex_reduce( _, [], Mins, ReducedList) :- 
	ReducedList = Mins.
lex_reduce( _, List, Mins, ReducedList) :-
	length( List, 1 ),
	append( Mins, List, ReducedList ).
lex_reduce( N, List, Mins, ReducedList) :-
	length( List, K ),
	#(K) #> 1,
	min_member( my_lex, MinTable, List ),
	sorted_row_sums( MinTable, SortedRowSums ),
	get_roots( N, Roots ),
	make_powers( N, Powers ),
	findall( Table, (member(Table, List), \+ can_permute_dispatcher( N, Roots, Powers, MinTable, Table, SortedRowSums) ), NewList ),
	append( Mins, [MinTable], NewMins ),
	lex_reduce( N, NewList, NewMins, ReducedList).

populate_row_sum_tables_list( SimpleTables, TablesList ) :-
	maplist( full_sorted_row_sums, SimpleTables, TooManyRowSums ),
	list_to_set( TooManyRowSums, RowSumsList ),
	maplist( filter_by_row_sum(SimpleTables), RowSumsList, TablesList ).
filter_by_row_sum( SimpleTables, RowSums, Tables ) :-
	include( filter_by_row_sum_helper(RowSums), SimpleTables, Tables).

my_lex( A, B ) :-
	append( A, As ),
	append( B, Bs ),
	lex_chain( [As, Bs] ).
%%%%%%

%%% 4. TORAL SWITCHING %%%
perform_toral_switchings( N, Tables, SwitchedTables ) :-
	M is 2**N-1,
	K is 2**(N-1),
	numlist( M, Indices ),
	get_roots( N, Roots ),
	make_powers( N, Powers ),    
	maplist(locate_basis_nilps( Indices, K ), Tables, NilpIndicesList ),
	make_base_torus( BaseTorus, N, Indices ),
	findall( [A,B], ( member(A, Indices), member(B, Indices), #(A) #< #(B) ), Pairs ), 
	maplist( make_torus_switches( Indices, Pairs, Roots, BaseTorus ), Tables, NilpIndicesList, SwitchedTablesList ),
	once(canonical_order_tables_list( N, Roots, Powers, Tables, SwitchedTablesList, OrderedTablesList )), 
	once(make_adjacency_mat( Tables, OrderedTablesList, AdjacencyMat )),
	adjacency_mat_to_ugraph( AdjacencyMat, SwitchingGraph ),
	conn_comps( SwitchingGraph, Comps ),
	maplist( get_min_from_comp( Tables ), Comps, SwitchedTables ).

locate_basis_nilps( Indices, K, Rows, NilpIndices ) :-
	include( locate_basis_nilps_helper( Rows, K ), Indices, NilpIndices ).
locate_basis_nilps_helper( Rows, K, I ) :-
	nth1( I, Rows, Row ), 
	sum( Row, #<, #(K) ).

make_base_torus( BaseTorus, N, Indices ) :-
	length( BaseTorus, N ),
	numlist( N, SmallIndices ),
	maplist( make_base_torus_row( Indices ), SmallIndices, BaseTorus ).
make_base_torus_row( Indices, RowIndex, BaseTorusRow ) :-
	RootIndex is 2**(RowIndex - 1),
	maplist( make_base_torus_entry( RootIndex ), Indices, BaseTorusRow ).
make_base_torus_entry( RootIndex, ColIndex, Entry ) :-
	( #(RootIndex) /\ #(ColIndex) ) #= 0,
	#(Entry) #= 0.
make_base_torus_entry( RootIndex, ColIndex, Entry ) :-
	( #(RootIndex) /\ #(ColIndex) ) #= #(RootIndex),
	#(Entry) #= 1.

make_torus_switches( _, _, _, _, _, [], SwitchedTables ) :-
	SwitchedTables = [].
make_torus_switches( Indices, Pairs, Roots, BaseTorus, Table, NilpIndices, SwitchedTables ) :-
	length( NilpIndices, K ),
	#(K) #> 0,
	maplist( switch_torus( Indices, Pairs, Table, BaseTorus, Roots), NilpIndices, SwitchedTables ).
switch_torus( Indices, Pairs, Table, BaseTorus, Roots, NilpIndex, SwitchedTable ) :-
	maplist( modify_toral_element_checker( NilpIndex ), BaseTorus, ModifyToralElementChecker ),
	maplist( make_new_basis( Indices, Table, BaseTorus, NilpIndex, ModifyToralElementChecker), Roots, NewBasis ),
	make_new_table( Indices, Pairs, Table, NewBasis, SwitchedTable ).

modify_toral_element_checker( NilpIndex, ToralRow, ModifyToralElement ) :-
	nth1( NilpIndex, ToralRow, ModifyToralElement ).

make_new_basis( Indices, Table, BaseTorus, NilpIndex, ModifyToralElementChecker, Root, NewBasisElement ) :-
	same_length( Table, NewBasisElement ),
	NewBasisElement ins 0..1,
	once( make_new_basis_helper( Indices, Table, BaseTorus, NilpIndex, ModifyToralElementChecker, Root, NewBasisElement ) ),
	label(NewBasisElement).
make_new_basis_helper( Indices, Table, BaseTorus, NilpIndex, ModifyToralElementChecker, Root, NewBasisElement ) :-
	sum( NewBasisElement, #>, 0 ),
	maplist( apply_root_value( Indices, Table, NilpIndex, NewBasisElement), BaseTorus, ModifyToralElementChecker, Root ).

apply_root_value( Indices, Table, NilpIndex, NewBasisElement, ToralRow, ModifyToralElement, RootVal ) :-
	maplist( times, ToralRow, NewBasisElement, TorusOnElement),
	nth1( NilpIndex, Table, NilpRow ),
	maplist( nilp_on_element(NilpRow, NewBasisElement, NilpIndex), Indices, NilpOnElement ),
	maplist( times(ModifyToralElement), NilpOnElement, ScaledNilpOnElement ),
	maplist( sum_mod_2, TorusOnElement, ScaledNilpOnElement, ActedOnElement ),
	maplist( times(RootVal), NewBasisElement, ScaledNewBasisElement ),
	maplist( eq, ScaledNewBasisElement, ActedOnElement).
nilp_on_element(_, _, NilpIndex, NilpIndex, NilpOnElementEntry ) :-
	#(NilpOnElementEntry) #= 0.
nilp_on_element(NilpRow, NewBasisElement, NilpIndex, Index, NilpOnElementEntry ) :-
	NilpIndex #\= Index,
	#(NewIndex) #= (NilpIndex xor Index),
	nth1( NewIndex, NewBasisElement, V),
	nth1( NewIndex, NilpRow, NilpOnElementEntry0 ),
	#(NilpOnElementEntry) #= #(NilpOnElementEntry0) * #(V).

make_new_table( Indices, Pairs, Table, NewBasis, NewTable ) :-
	maplist( set_diagonal( NewTable ), Indices ),
	maplist( intify_basis(Indices), NewBasis, IntBasis ),
	same_length( Table, NewTable ),
	maplist( same_length(NewTable), NewTable ),
	maplist( make_new_table_entry( Indices, Table, IntBasis, NewTable ), Pairs ).

make_new_table_entry( Indices, Table, IntBasis, NewTable, [I1, I2] ) :-
	nth1( I1, IntBasis, Ints1 ),
	nth1( I2, IntBasis, Ints2 ),
	#(I3) #= ( #(I1) xor #(I2) ),
	nth1( I3, IntBasis, Ints3 ),
	nth1( 1, Ints3, MainInt ),
	maplist( make_xor_pair(MainInt), Indices, XORPairs ),
	include( filter_xor_pairs( Ints1, Ints2 ), XORPairs, FilteredPairs ), 
	get_entries( Table, FilteredPairs, Entries ),
	sum( Entries, #=, #(Tot) ),
	#(Val) #= ( #(Tot) mod 2 ),
	get_entry( NewTable, [I1, I2], Val ),
	get_entry( NewTable, [I2, I1], Val ).
filter_xor_pairs( Ints1, Ints2, [I, J] ) :-
	member(I, Ints1),
	member(J, Ints2).
make_xor_pair( A, B, [C,D] ) :-
	#(C) #= #(B),
	#(D) #= ( #(A) xor #(B)).
intify_basis( Indices, BasisRow, Ints ) :-
	include( intify_basis_helper(BasisRow), Indices, Ints ).
intify_basis_helper( BasisRow, I ) :-
	nth1( I, BasisRow, 1 ).
set_diagonal( NewTable, I ) :-
	get_entry( NewTable, [I,I], 0 ).

canonical_order_tables_list( N, Roots, Powers, Tables, SwitchedTablesList, OrderedTablesList) :-
	maplist( make_table_row_sum_pair, Tables, TablesWithSums ),
	maplist( canonical_order_rows0( N, Roots, Powers, TablesWithSums ), SwitchedTablesList, OrderedTablesList ).
make_table_row_sum_pair( Table, [X, Y] ) :-
	X = Table,
	full_sorted_row_sums( Table, Y ).
canonical_order_rows0( _, _, _, _, [], OrderedTables) :-
	OrderedTables = [].
canonical_order_rows0( N, Roots, Powers, SimpleTablesWithSums, Tables, OrderedTables) :-
	length(Tables, K),
	#(K) #> 0,
	maplist(canonical_order_rows( N, Roots, Powers, SimpleTablesWithSums ), Tables, OrderedTables).
canonical_order_rows( N, Roots, Powers, SimpleTablesWithSums, Rows, OrderedRows) :-
	full_sorted_row_sums( Rows, RowSums),
	sort( RowSums, SortedRowSums),
	findall( Table, ( member(TableWithSums, SimpleTablesWithSums), TableWithSums = [ Table, RowSums ] ), SimpleTables ),
	first_sol( lex_reduce_to_simple( N, Roots, Powers, Rows, SortedRowSums ), SimpleTables, OrderedRows ).
lex_reduce_to_simple( N, Roots, Powers, Rows, SortedRowSums, Table ) :-
	can_permute_dispatcher( N, Roots, Powers, Rows, Table, SortedRowSums).

make_adjacency_mat( Tables, OrderedTablesList, AdjacencyMat ) :-
	length( Tables, M ),
	numlist( M, Indices ),
	maplist( make_adjacency_row( Tables, OrderedTablesList, Indices ), Indices, Mat ),
	transpose( Mat, TMat),
	maplist(maplist( my_max ), Mat, TMat, AdjacencyMat).
make_adjacency_row( Tables, OrderedTablesList, Indices, RowIndex, AdjacencyRow ) :-
	maplist( make_adjacency_entry( Tables, OrderedTablesList, RowIndex ), Indices, AdjacencyRow ).
make_adjacency_entry( _, _, RowIndex , RowIndex, AdjacencyEntry ) :-
	#(AdjacencyEntry) #= 0.
make_adjacency_entry( Tables, OrderedTablesList, RowIndex , ColIndex, AdjacencyEntry ) :-
	#(RowIndex) #\= #(ColIndex),
	nth1( RowIndex, Tables, Table ),
	nth1( ColIndex, OrderedTablesList, TableList ),
	set_adjacency_entry( Table, TableList, AdjacencyEntry ).
set_adjacency_entry( Table, TableList, AdjacencyEntry ) :-
	(   member( Table, TableList ),
	#(AdjacencyEntry) #= 1
	;   #(AdjacencyEntry) #= 0
	).
adjacency_mat_to_ugraph( A, G ) :-
	length( A, N ),
	numlist(N, Indices),
	maplist( collect_nbrs(A,Indices), Indices, NbrsList ), 
	maplist( make_vertex, Indices, NbrsList, G ).
make_vertex( Index, Nbrs, V ) :-
	V = Index-Nbrs.
collect_nbrs(A, Indices, Index, Nbrs ) :-
	nth1( Index, A, Row ),
	findall( X, ( member( X, Indices ), nth1( X, Row, 1 ) ), Nbrs).

conn_comps( G, Comps ) :-
	length( G, N), 
	numlist(N, Indices),
	transitive_closure(G, C),
	maplist( get_closure(C), Indices, RepeatComps  ),
	setof( Comp, member( Comp, RepeatComps ), Comps ).
get_closure(C, Index, Closure ) :-
	neighbours( Index, C, PreClosure),
	add_self( Index, PreClosure, NotSortedClosure),
	sort( NotSortedClosure, Closure).
add_self( Index, A, B) :-
	(   member(Index, A), B = A
	;   append( [A, [Index]], B )
	).
get_min_from_comp( Tables, Comp, MinTable ) :-
	get_values( Tables, Comp, CompTables ),
	min_member( my_lex, MinTable, CompTables ).
%%%%%%

%%% 5. SYMMETRY PREDICATES %%%

make_perm(Roots,Powers, Mat, RowPerm ) :-
	maplist( perm_it(Powers, Mat), Roots, RowPerm ).
perm_it(Powers, Mat, Root, Entry ) :-
	act_mat(Mat, Root, RootOut),
	bin_2_dec(Powers, RootOut, Entry).

permute_rows(Rows, PermIndex, NewRow ) :-
	nth1( PermIndex, Rows, NewRow).

can_permute_dispatcher( N, Roots, Powers, Table1, Table2, SortedRowSums ) :-
	length( Mat, N ),
	maplist(same_length(Mat), Mat),
	once(can_permute( N, Roots, Powers, Table1, Table2, Mat, SortedRowSums ) ).
can_permute( N, Roots, Powers, T1, T2, Mat, SortedRowSums ) :-
	length( Mat, N ),
	append(Mat,Ms), Ms ins 0..1,
	partition_by_row_sums( T1, SortedRowSums, Partition1 ),
	partition_by_row_sums( T2, SortedRowSums, Partition2 ),
	maplist( map_partitions(N, Mat, Powers), Partition1, Partition2 ),
	make_perm( Roots, Powers, Mat, RowPerm ),
	maplist(permute_rows(T2), RowPerm, NewRows ),
	transpose(NewRows, TNewRows),
	maplist(permute_rows(TNewRows), RowPerm, T1 ).
map_partitions( N, Mat, _, L1, L2 ) :-
	length( L1, 1 ),
	Rank is N,
	nth1( 1, L1, N1 ),
	nth1( 1, L2, N2 ),
	dec_2_bin( Rank, N1, B1 ),
	dec_2_bin( Rank, N2, B2 ),
	act_mat( Mat, B1, B2 ).
map_partitions( N, Mat, Powers, L1, L2 ) :-
	length( L1, K ),
	#(K) #> 1,
	Rank is N,
	maplist( dec_2_bin( Rank ), L1, Domain ),
	maplist( set_range( Mat, Powers, L2 ), Domain ).
set_range( Mat, Powers, Range, B1 ) :-
	act_mat( Mat, B1, B2 ),
	bin_2_dec( Powers, B2, D ),
	element( _, Range, D ).
partition_by_row_sums( Mat, SortedRowSums, Partition ) :-
	maplist( has_row_sum( Mat ), SortedRowSums, Partition ).
has_row_sum( Mat, S, L ) :-
	findall( I, ( nth1( I, Mat, R ), sum( R, #=, #(S) ) ), L ).

filter_by_row_sum_helper( RowSums, Table) :-
	full_sorted_row_sums( Table, RowSums ).
sorted_row_sums( M, Sorted ) :-
	maplist( my_sum, M, Sums ),
	sort( Sums, Sorted ).
full_sorted_row_sums( M, Sorted ) :-
	maplist( my_sum, M, Sums ),
	msort( Sums, Sorted ).
%%%%%%

%%% 6. UTILITY PREDICATES %%%
plus(X,Y,Z) :- #(Z) #= #(X) + #(Y).
eq(A,B) :- #(A) #= #(B).
notEqual(A,B) :- #(A) #\= #(B).
equal_to_zero( X ) :- #(X) #= 0.
sum_mod_2( A, B, C ) :- ((#(A) + #(B)) mod 2 ) #= #(C).
times(A,B,C) :- #(C) #= #(A) * #(B).

my_sum( Elems, Tot ) :-
	sum( Elems, #=, #(Tot) ).

my_max( A, B, C ) :-
	#(A) #< #(B),
	#(C) #= #(B).
my_max( A, B, C ) :-
	#(A) #>= #(B),
	#(C) #= #(A).

set_value_to_zero( Index, Row ) :-
	nth1( Index, Row, 0).

get_value( Row, Index, Value ) :-
	nth1( Index, Row, Value).
get_values( Row, Indices, Values ) :-
	maplist( get_value(Row), Indices, Values ).
get_entry( Rows, [RowIndex, ColIndex], Entry ) :-
	nth1( RowIndex, Rows, Row),
	nth1( ColIndex, Row, Entry).
get_entries( Rows, Indices, Entries ) :-
	maplist( get_entry(Rows), Indices, Entries ).

make_powers(N, Powers) :-
	make_powers0(N, Roots),
	reverse(Roots, Powers).
make_powers0(N, Roots) :-
	numlist(N, Indices),
	maplist(make_powers_helper, Indices, Roots).  
make_powers_helper(Ind, Root) :-
	Val is 2^(Ind - 1),
	#(Root) #= #(Val).

	dec_2_bin(Rank,N,L) :-
	dec2bin(N, L0),
	length(L0,K),
	length(L,Rank),
	M is Rank - K,
	findall(0, between(1, M, _), Zs),
	append(Zs, L0, L).
	dec2bin(0,[0]).
	dec2bin(1,[1]).
dec2bin(N,L):- 
	N > 1,
	X is N mod 2,
	Y is N // 2,  
	dec2bin(Y,L1), 
	append(L1, [X], L).
bin_2_dec(Powers, L, N) :-
	maplist(times, Powers, L, P),
	sum(P,#=,#(N)).

member_( L, E ) :- 
	member( E, L ).

get_roots(N, Roots) :-
	findall( L, ( length( L, N ), L ins 0..1, label(L) ), RootsZ ),
	RootsZ = [ _ | Roots ].
get_gln(N,Roots,GLN) :-
	length(Mat,N),
	maplist(same_length(Mat),Mat),
	append(Mat,Vs),
	Vs ins 0..1,
	findall(Mat,(mat_in_gln(Mat,Roots),label(Vs)),GLN).
mat_in_gln(Mat,Roots) :-
	maplist(vec_not_in_ker_mat(Mat),Roots).
vec_not_in_ker_mat(Mat,VecIn) :-
	same_length(VecIn,VecOut), 
	VecOut ins 0..1,
	act_mat(Mat,VecIn,VecOut),
	sum(VecOut,#>,0).
act_mat(Mat, VecIn, VecOut) :- % NB VecIn cannot have variables
	maplist(my_scalar_prod(VecIn),Mat,VecOut).
my_scalar_prod(V1,M1,X) :- 
	same_length(V1,Prods),
	maplist(times,V1,M1,Prods),
	sum(Prods,#=,#(Y)), 
	(#(Y) mod 2) #= #(X).

make_kth_row( Indices, K, KthRow ) :-
	maplist( set_kth_row( K ), Indices, KthRow ).
set_kth_row(  K, K, Entry ) :- 
	#(Entry) #= 1.
set_kth_row(  K, Index, Entry ) :- 
	#(Index) #\= K, 
	#(Entry) #= 0. 
row_of_n_zeros( N, Row ) :-
	length( Row, N ),
	maplist( equal_to_zero, Row ).
place_entry( Ind, Entry, X, Y ) :- 
	L is Ind - 1,
	length( A, L ),
	append( A, B, X ),
	append( [ A, [Entry], B ], Y ).

first_sol(Goal, List, Sol) :-
	first_sol_(List, Goal, Sol).
	first_sol_([], _, []).
first_sol_([X1|Xs1], P, Sol) :-
	(   call(P, X1) ->  Sol = X1
	;   first_sol_(Xs1, P, Sol )
	).

subset_set([], _).
subset_set([X|Xs], S) :-
	append(_, [X|S1], S),
	subset_set(Xs, S1).

numlist( N, Xs ) :-
	numlist( 1, N, Xs ).
%%%%%%
\end{lstlisting}

		\section{List of known simple Lie algebras over $\GF(2)$ up to dimension 31}\label{sec:bigtable}
\subsection{Extension of scalars and Weil restriction}
Below is a list of the simple Lie algebras over $\GF(2)$ known to the authors. Note that if $K/k$ is a field extension then any Lie algebra $L$ over a field $k$ becomes a Lie algebra $L_K$ over the field $K$ by extension of scalars: i.e. $L_K:=L\otimes_k K$ with bracket $[a\otimes \lambda,b\otimes\mu]=[a,b]\otimes\lambda\mu$ extended linearly. It need not be true that $L_K$ is simple, even when $L$ is; if $L_K$ is always simple, then we say it is \emph{absolutely simple}. Conversely, if $L$ is a Lie algebra over $K$, we get a Lie algebra $\mathrm{R}_{K/k}(L)$ by viewing $K$ as a vector space over $k$; this process is more technically known as \emph{Weil restriction}. It is easy to see that a $L$ is non-trivial and simple of dimension $n$ over $K$ implies that $\mathrm{R}_{K/k}(L)$ is simple of dimension $[K:k]\cdot n$ over $k$. (Weil restrictions of simple Lie algebras across non-trivial field extensions are never absolutely simple.)  

When listing the simple Lie algebras $\mathrm{R}_{K/k}(L_K)$ where $L$ is simple, we keep consistent with \cite{Eic10} by omitting the notation $\mathrm{R}_{K/k}$, and write just $L\otimes K$.

\subsection{Forms of Lie algebras}\label{sec:forms}We recall, say from \cite{Sel67}, that if $K/k$ is an extension of fields, then a $k$-\emph{form} of a Lie algebra $L$ over $K$ is a Lie algebra $M$ over $k$ such that $M_K\cong L$. We describe a way to make some non-trivial $k$-forms, which mirrors what happens for finite simple groups---cf.~\cite[\S13]{Car89}. The classical Lie algebras $A_n$ when $n\geq 2$, $D_n$ when $n\geq 4$ and $E_6$ admit a symmetry of their Dynkin diagram which leads to an automorphism of order $2$. In type $A_n$, i.e. when $L=\sl_{n+1}$ (or $\psl_{n+1}=\sl_{n+1}/Z(\sl_{n+1})$) then this automorphism can be described by $X\mapsto -X^T$ where $X^T$ denotes the transpose. Furthermore when $n=4$ the Dynkin diagram is a graph of order $4$ with three enges and one central node. In that case there are also automorphisms of order $3$. As was shown in \cite[Thm.~VI.6.1]{Sel67}, these give rise to non-trivial forms when $p\neq 2,3$. 

Now suppose $k=\GF(2)$ and $K/k$ is finite---thus $K=\mathbb{F}_{2^n}$ for some $n$. One can similarly produce $k$-forms of these Lie algebras which are new in some cases. So let $K=\GF(4)$ or $\GF(8)$ as required. Then one gets a $k$-form of $L$ via $M:=\{x\in L_K\mid x=\sigma(\bar x)\}$, where $\bar x$ is induced by applying the Frobenius automorphism to the coefficients of $\bar x$ written in a $k$-basis of $L$. One gets a Lie algebra of the same dimension as $L$. Thus we get Lie algebras ${}^2A_n$, ${}^2D_n$, ${}^3D_4$, ${}^2E_6$. One can show directly that some of these are isomorphic; for example $D_4\cong {}^2D_4\cong {}^3D_4$, and ${}^2A_3\cong A_3$. But many are not; for example $A_4\not\cong{}^2A_4$. By analogy with real Lie groups, one could write $\mathfrak{su}_{n+1}$ or $\mathfrak{psu}_{n+1}$ in place of ${}^2A_n$. The Lie algebra denoted `$V_8$' in \cite{Eic10} and \cite{Em22} is in fact ${}^2A_2$.

\begin{rmk} Over any field whatsoever, the only example of a simple (finite-dimensional) Lie algebra known to the authors not to be $2$-generated is the above algebra '$A_3$', or otherwise $\mathfrak{psu}_4\cong\psl_4\cong G_2$, specifically over the field of two elements. It would be interesting to know if this is genuinely the only example.\end{rmk}

\subsection{The table}Aside from those algebras which are new, we follow the naming convention in \cite{Eic10} and \cite{Em22} to which we refer the reader. The Skryabin algebra Skry is given a full treatment in \cite{GGRZ22}. The algebras DD and TW are not thin; they appear in \cite{Em22} as $L_{15,11}$ and $L_{15,10}$.

\pgfplotstableread{KnownLieAlgebras.csv}\data
\pgfplotstabletypeset[columns/Names/.style={string type},
		columns/Thin/.style={string type},
    every even row/.style={
        before row={\rowcolor[gray]{0.9}}},
    every head row/.style={
        before row=\toprule,after row=\midrule},
    every last row/.style={
        after row=\bottomrule},
        font=\footnotesize,
          row predicate/.code={%
  \pgfplotstablegetelem{#1}{Dim}\of{\data}
  \ifnum\pgfplotsretval<32\relax
  \else\pgfplotstableuserowfalse\fi}
          ]{\data}

\end{appendices}

	{\footnotesize
		\bibliographystyle{amsalpha}
		\bibliography{bib}}

\providecommand{\bysame}{\leavevmode\hbox to3em{\hrulefill}\thinspace}
\providecommand{\MR}{\relax\ifhmode\unskip\space\fi MR }
\providecommand{\MRhref}[2]{%
  \href{http://www.ams.org/mathscinet-getitem?mr=#1}{#2}
}
\providecommand{\href}[2]{#2}
\begin{thebibliography}{WSTL12}

\bibitem[Cam98]{Cam98}
Peter~J. Cameron, \emph{Introduction to algebra}, Oxford Science Publications,
  Oxford University Press, Oxford, 1998. \MR{1643302}

\bibitem[Car89]{Car89}
Roger~W. Carter, \emph{Simple groups of {L}ie type}, Wiley Classics Library,
  John Wiley \& Sons Inc., New York, 1989, Reprint of the 1972 original, A
  Wiley-Interscience Publication. \MR{MR1013112 (90g:20001)}

\bibitem[CM12]{sicstus}
Mats Carlsson and Per Mildner, \emph{S{ICS}tus {P}rolog---the first 25 years},
  Theory Pract. Log. Program. \textbf{12} (2012), no.~1-2, 35--66. \MR{2885854}

\bibitem[Col87]{colmerauer1987opening}
Alain Colmerauer, \emph{Opening the prolog iii universe: A new generation of
  prolog promises some powerful capabilities}, Byte (1987).

\bibitem[Eic10]{Eic10}
Bettina Eick, \emph{Some new simple {L}ie algebras in characteristic 2}, J.
  Symbolic Comput. \textbf{45} (2010), no.~9, 943--951. \MR{2661164}

\bibitem[EM22]{Em22}
Bettina Eick and Tobias Moede, \emph{Computing subalgebras and
  $\mathbb{Z}_2$-gradings of simple {L}ie algebras over finite fields}, 2022.

\bibitem[Fel91]{felty1991logic}
Amy Felty, \emph{A logic programming approach to implementing higher-order term
  rewriting}, International Workshop on Extensions of Logic Programming,
  Springer, 1991, pp.~135--161.

\bibitem[FH91]{FH91}
William Fulton and Joe Harris, \emph{Representation theory}, Graduate Texts in
  Mathematics, vol. 129, Springer-Verlag, New York, 1991, A first course,
  Readings in Mathematics. \MR{1153249 (93a:20069)}

\bibitem[GGRZ22]{GGRZ22}
Alexander Grishkov, Henrique Guzzo, Jr., Marina Rasskazova, and Pasha
  Zusmanovich, \emph{On simple 15-dimensional {L}ie algebras in characteristic
  2}, J. Algebra \textbf{593} (2022), 295--318. \MR{4345277}

\bibitem[GAP14]{GAP}
The~GAP group, \emph{Gap - groups, algorithms, and programming}, Version 4.7.5
  (2014), http://www.gap--system.org.

\bibitem[GS19]{GARNELO201917}
Marta Garnelo and Murray Shanahan, \emph{Reconciling deep learning with
  symbolic artificial intelligence: representing objects and relations},
  Current Opinion in Behavioral Sciences \textbf{29} (2019), 17--23, Artificial
  Intelligence.

\bibitem[Hor51]{horn_1951}
Alfred Horn, \emph{On sentences which are true of direct unions of algebras},
  Journal of Symbolic Logic \textbf{16} (1951), no.~1, 14--21.

\bibitem[Kow88]{prolog88}
Robert~A. Kowalski, \emph{The early years of logic programming}, Commun. ACM
  \textbf{31} (1988), no.~1, 38--43.

\bibitem[Lee15]{lee2015foundations}
K.D. Lee, \emph{Foundations of programming languages}, Undergraduate Topics in
  Computer Science, Springer International Publishing, 2015.

\bibitem[LF11]{Lally2011NaturalLP}
Adam Lally and Paul Fodor, \emph{Natural language processing with prolog in the
  ibm watson system}, 2011.

\bibitem[Pre89]{Pre89}
A.~A. Premet, \emph{Regular {C}artan subalgebras and nilpotent elements in
  restricted {L}ie algebras}, Mat. Sb. \textbf{180} (1989), no.~4, 542--557,
  560. \MR{997900}

\bibitem[PS02]{pereira2002prolog}
Fernando~CN Pereira and Stuart~M Shieber, \emph{Prolog and natural-language
  analysis}, Microtome Publishing, 2002.

\bibitem[PS06]{PS06}
Alexander Premet and Helmut Strade, \emph{Classification of finite dimensional
  simple {L}ie algebras in prime characteristics}, Representations of algebraic
  groups, quantum groups, and {L}ie algebras, Contemp. Math., vol. 413, Amer.
  Math. Soc., Providence, RI, 2006, pp.~185--214. \MR{2263096 (2007g:17016)}

\bibitem[Sch03]{schmid2003inductive}
U.~Schmid, \emph{Inductive synthesis of functional programs: Universal
  planning, folding of finite programs, and schema abstraction by analogical
  reasoning}, Lecture Notes in Computer Science, Springer Berlin Heidelberg,
  2003.

\bibitem[Sel67]{Sel67}
G.~B. Seligman, \emph{Modular {L}ie algebras}, Ergebnisse der Mathematik und
  ihrer Grenzgebiete, Band 40, Springer-Verlag New York, Inc., New York, 1967.
  \MR{0245627}

\bibitem[SF88]{SF88}
H.~Strade and R.~Farnsteiner, \emph{Modular {L}ie algebras and their
  representations}, Monographs and Textbooks in Pure and Applied Mathematics,
  vol. 116, Marcel Dekker Inc., New York, 1988. \MR{929682 (89h:17021)}

\bibitem[SHCO95]{somogyi1995logic}
Zoltan Somogyi, Fergus Henderson, Thomas Conway, and Richard O'Keefe,
  \emph{Logic programming for the real world}, Proceedings of the ILPS,
  vol.~95, Citeseer, 1995, pp.~83--94.

\bibitem[Skr19]{Skr19}
S.~Skryabin, \emph{The normal shapes of the symplectic and contact forms over
  algebras of divided powers}.

\bibitem[Sti88]{stickel1988prolog}
Mark~E Stickel, \emph{A prolog technology theorem prover: Implementation by an
  extended prolog compiler}, Journal of Automated reasoning \textbf{4} (1988),
  no.~4, 353--380.

\bibitem[Str04]{Str04}
H.~Strade, \emph{Simple {L}ie algebras over fields of positive characteristic.
  {I}}, de Gruyter Expositions in Mathematics, vol.~38, Walter de Gruyter \&
  Co., Berlin, 2004, Structure theory. \MR{2059133 (2005c:17025)}

\bibitem[Str09]{Str09}
Helmut Strade, \emph{Simple {L}ie algebras over fields of positive
  characteristic. {II}}, de Gruyter Expositions in Mathematics, vol.~42, Walter
  de Gruyter \& Co., Berlin, 2009, Classifying the absolute toral rank two
  case. \MR{2573283 (2011c:17035)}

\bibitem[Str13]{Str13}
\bysame, \emph{Simple {L}ie algebras over fields of positive characteristic.
  {III}}, De Gruyter Expositions in Mathematics, vol.~57, Walter de Gruyter
  GmbH \& Co. KG, Berlin, 2013, Completion of the classification. \MR{3025870}

\bibitem[Tri12]{clpfd}
Markus Triska, \emph{The finite domain constraint solver of {SWI-Prolog}},
  FLOPS, LNCS, vol. 7294, 2012, pp.~307--316.

\bibitem[WSTL12]{prolog}
Jan Wielemaker, Tom Schrijvers, Markus Triska, and Torbj\"o{}rn Lager,
  \emph{{SWI-Prolog}}, Theory and Practice of Logic Programming \textbf{12}
  (2012), no.~1-2, 67--96.

\end{thebibliography}

\end{document}